\numberwithin{equation}{section}
\def\cocoa{{\hbox{\rm C\kern-.13em o\kern-.07em C\kern-.13em o\kern-.15em A}}}
\newtheorem{theorem}{Theorem}[section]
\newtheorem{question}[theorem]{Question}
\newtheorem{lemma}[theorem]{Lemma}
\newtheorem{proposition}[theorem]{Proposition}
\newtheorem{corollary}[theorem]{Corollary}
\theoremstyle{definition}
\newtheorem{remark}[theorem]{Remark}
\newtheorem{example}[theorem]{Example}
\newtheorem{construction}[theorem]{Construction}
\newcommand {\sHom}{\mathcal{H}\kern -0.25ex{\mathit om}}
\newcommand {\sExt}{\mathcal{E}\kern -0.25ex{\mathit xt}}
\newcommand {\sTor}{\mathcal{T}\kern -0.25ex{\mathit or}}
\newcommand {\im}{\mathrm{im}}
\newcommand {\rk}{\mathrm{rk}}
\newcommand {\Ext}{\mathrm{Ext}}
\newcommand {\Hom}{\mathrm{Hom}}
\newcommand {\Hilb}{\mathcal{H}\kern -0.25ex{\mathit ilb\/}}
\newcommand {\cK}{\mathcal{K}}
\newcommand {\cA}{\mathcal{A}}
\newcommand {\cB}{\mathcal{B}}
\newcommand{\cC}{{\mathcal C}}
\newcommand{\cS}{{\mathcal S}}
\newcommand{\cE}{{\mathcal E}}
\newcommand{\cF}{{\mathcal F}}
\newcommand{\cN}{{\mathcal N}}
\newcommand{\cO}{{\mathcal O}}
\newcommand{\cG}{{\mathcal G}}
\newcommand{\cI}{{\mathcal I}}
\newcommand {\bZ}{\mathbb{Z}}
\newcommand {\bC}{\mathbb{C}}
\newcommand {\bP}{\mathbb{P}}
\newcommand {\bF}{\mathbb{F}}
\newcommand{\Pic}{\operatorname{Pic}}
\def\p#1{{\bP^{#1}}}
\def\mapright#1{\mathbin{\smash{\mathop{\longrightarrow}
\limits^{#1}}}}
\title[Instanton bundles on $\p1\times\bF_1$]{Instanton bundles on $\p1\times\bF_1$}
\thanks{The first and second authors are a member of GNSAGA group of INdAM and are supported by the framework of the MIUR grant Dipartimenti di Eccellenza 2018-2022 (E11G18000350001). The third author is supported by the grant MAESTRO NCN - UMO-2019/34/A/ST1/00263 - Research in Commutative Algebra and Representation Theory. The third author is also partially supported by Narodowe Centrum Nauki 2018/30/E/ST1/00530. The authors are very grateful for the hospitality and support of the IMPAN, Krakow during part of the preparation of this paper. }
\subjclass[2020]{Primary: 14D21. Secondary: 14J60, 14J45}
\keywords{Fano threefold, del Pezzo surface, vector bundle, $\mu$--(semi)stable bundle, simple bundle}
\author[V. Antonelli, G. Casnati, O. Genc]{V. Antonelli, G. Casnati, O. Genc}
\begin{document}

\maketitle

\begin{abstract}
In this paper we deal with a particular class of rank two vector bundles (\emph{instanton} bundles) on the Fano threefold of index one $F:=\mathbb{F}_1 \times \mathbb{P}^1$. We show that every instanton bundle on $F$ can be described as the cohomology of a monad whose terms are free sheaves. Furthermore we prove the existence of instanton bundles for any admissible second Chern class and we construct a nice component of the moduli space where they sit. Finally we show that minimal instanton bundles (i.e. with the least possible degree of the second Chern class) are aCM and we describe their moduli space.
\end{abstract}

\section{Introduction}
The study of vector bundles on a variety $X$ is a fruitful field of research in algebraic geometry. In particular  the study of vector bundles $\cE$ with some prescribed cohomology groups gives interesting information on the variety itself. 

For example, when $X$ is endowed with a (very) ample polarization $\cO_X(h)$, then it is natural to deal with bundles defined by some particular vanishings. E.g., on the one hand it is interesting to understand when the bundle $\cE$ is {\sl aCM}, i.e. $h^i\big(X,\cE(th)\big)=0$ for each $t\in\bZ$ and $0< i<\dim(X)$. On the other hand, one can also try to classify bundles with very few vanishings.

Examples of vector bundles of the latter type are provided by instanton bundles on a {\sl Fano threefold} $X$, i.e. a smooth threefold over the complex field $\bC$ whose anticanonical line bundle $\omega_X^{-1}$ is ample: see \cite{I--P} for the classification of such threefolds. In particular, the {\sl index of $X$} is the greatest $i_X\in\bZ$ such that $\omega_X\cong\cO_X(-i_Xh)$ for an ample $\cO_X(h) \in \Pic(X)$. It is well--known that $1\le i_X\le 4$ and that such an $\cO_X(h)$ is uniquely determined: it is called the {\sl fundamental line bundle} of $X$. 

The easiest Fano threefold is $\p3$: an instanton bundle $\cE$ on $\p3$ is a rank $2$ bundle such that $h^0\big(\p3,\cE\big)=h^1\big(\p3,\cE(-2)\big)=0$ and $c_1(\cE)=0$. Instanton bundles on $\p3$ were first introduced in the seminal paper \cite{A--D--H--M} and widely studied from different viewpoints since the discovery of their connection, through the Atiyah--Penrose--Ward transformation, with the solutions of the Yang--Mills equations (see \cite{A--W}).
 
Because of its importance, the notion of instanton bundle has been extended in several papers to different classes of  varieties. Without any claim of completeness we refer the interested reader to \cite{Fa, Kuz} for instanton bundles on a Fano threefold $X$. If $\varrho_X=1$, then the condition $h^0\big(X,\cE\big)=0$ is equivalent to the $\mu$--stability of $\cE$ with respect to $\cO_X(h)$ (see \cite{H--L} for the definition and properties of $\mu$--(semi)stable bundles). When $\varrho_X\ge2$ this is no longer true. For this reason in \cite{C--C--G--M} instanton bundles are defined as rank $2$ bundles $\cE$ such that $h^0\big(X,\cE\big)=h^1\big(X,\cE(-q_Xh)\big)=0$, $c_1(\cE)=(2q_X-i_X)h$, where $q_X:=\left[\frac{i_X}2\right]$, and which are $\mu$--semistable with respect to $\cO_X(h)$. The study of such instanton bundles has been carried out on several Fano threefolds in \cite{M--M--PL, A--M,C--C--G--M, Cs--Ge}: see also \cite{A--C--G} for further generalizations. We collect in Section \ref{sGeneral} the results on instanton bundles that we need in the paper.

In this paper we deal with instanton bundles on $F:=\p1\times\bF_1$, where $\bF_1\subseteq\p8$ is the del Pezzo surface of degree $8$ obtained by blowing up a single point in $\p2$. In order to better explain the results proved in the paper we fix some notation. The threefold $F$ is endowed with two natural projections $p\colon F\to \bF_1$ and $\pi\colon F\to\p1$. If $\sigma\colon\bF_1\to\p2$ is the blow up morphism, then $\Pic(\bF_1)$ is freely generated by the classes $\ell$ of a the pull--back via $\sigma$ of a general line in $\p2$ and $e$ of the exceptional divisor. The intersection theory on $\bF_1$ is given by $\ell^2=-e^2=1$ and $\ell e=0$. 

The group $A(F)$ decomposes as $\pi^*A(\p1)\otimes p^*A(\bF_1)$, hence it is free. Let $\xi$ be the pull--back of a point via $\pi$. By abuse of notation, we will denote by $\ell$ and $e$ also the pull--backs of the generators of $\Pic(\bF_1)$ with the same name via $p$. The dual $\cO_F(h)$ of
$$
\omega_F\cong p^*\omega_{\bF_1}\otimes\pi^*\omega_{\p1}\cong\cO_F(-3\ell+e-2\xi).
$$
is very ample and not divisible in $\Pic(F)$, hence $F$ is a Fano threefold and $i_F=1$. Thanks to the decomposition of $A(F)$, we deduce that $A^2(F)$ has generators $\ell^2$, $\ell \xi$, $e\xi$ where $\ell^2=-e^2$ is the class of a fibre of $p$ and $\ell e=\xi^2=0$. Finally, the same decomposition also yields that $A^3(F)$ is generated by $\ell^2\xi$. See Section \ref{sFano} for some other results on $F$ used in the paper.

Our first result proved in Section \ref{sMonad} is the description of each instanton bundle on $F$ as the cohomology of a monad. More precisely, for every choice of integers $\alpha,\beta,\gamma,\delta,\epsilon$ such that
\begin{gather}
\label{BoundInstanton1}
 \alpha\ge3,\qquad \gamma\ge2,\qquad \alpha+\gamma\ge6,\qquad \alpha-\beta\ge2\\ 
\label{BoundInstanton2}
\epsilon\ge2-\beta-\gamma,\qquad \delta\ge1-\beta.
\end{gather}
we define
\begin{gather*}
\cC^{-1}:=\cO_{F}(-2\ell+e-\xi)^{\oplus\alpha+\gamma-6}\oplus\cO_{F}(-\ell-\xi)^{\oplus\epsilon},\\
\begin{align*}
\cC^0:=\cO_{F}(-\ell-\xi)^{\oplus\beta+\gamma+\epsilon-2}&\oplus\cO_{F}(-\ell+e-\xi)^{\oplus\alpha-\beta+\gamma-4}\oplus\\
&\oplus\cO_F(-2\ell+e)^{\oplus\alpha-3}\oplus\cO_F(-\ell)^{\oplus\delta},
\end{align*}\\
\cC^1:=\cO_{F}(-\xi)^{\oplus\gamma-2}\oplus\cO_{F}(-\ell)^{\oplus\beta+\delta-1}\oplus\cO_{F}(-\ell+e)^{\oplus\alpha-\beta-2}.
\end{gather*}
In Section \ref{sMonad} we prove that if $\cE$ is an instanton bundle with $c_2(\cE)=\alpha\ell\xi-\beta e\xi+\gamma\ell^2$, then inequalities \eqref{BoundInstanton1} necessarily hold  (see Remark \ref{rInequalities}). 

The main result of the section is as follows.

\begin{theorem}
\label{tMonad}
 Each  instanton bundle $\cE$ with $c_2(\cE)=\alpha\ell\xi-\beta e\xi+\gamma\ell^2$ is the cohomology of a monad $\cC^\bullet$ of the form
\begin{equation}
\label{Monad}
0\longrightarrow \cC^{-1}\longrightarrow \cC^0\longrightarrow\cC^1\longrightarrow0
\end{equation}
where $\delta:=h^1\big({F},\cE(-e)\big)$ and $\epsilon:=h^1\big({F},\cE(-e-\xi)\big)$.

Conversely, if the cohomology $\cE$ of the monad $\cC^\bullet$ is a $\mu$--semistable bundle, then $\cE$ is an instanton bundle with $c_2(\cE)=\alpha\ell\xi-\beta e\xi+\gamma\ell^2$ on $F$ such that
\begin{enumerate}
\item $h^1\big({F},\cE(-e)\big)\le \delta$;
\item $h^1\big({F},\cE(-e-\xi)\big)\le \epsilon$;
\item $h^1\big(F,\cE(-D)\big)=0$ for each effective integral divisor $D\not\in\vert e\vert$.
\end{enumerate}
\end{theorem}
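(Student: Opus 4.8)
The plan is to realize every instanton bundle as the middle cohomology of a Beilinson-type monad attached to a full exceptional collection of line bundles on $F=\bP^1\times\bF_1$, and to run the same machinery backwards for the converse. First I would record the cohomological vanishings forced on an instanton bundle $\cE$. The defining conditions give $h^0(F,\cE)=h^1(F,\cE)=0$; since $c_1(\cE)=-3\ell+e-2\xi$ is the class of $\omega_F$, we have $\cE^\vee\cong\cE\otimes\omega_F^{-1}$, so Serre duality yields $h^3(F,\cE)=h^0(F,\cE)=0$ and only $h^2(F,\cE)$ can survive, its dimension being pinned down by Riemann–Roch in terms of $c_2(\cE)=\alpha\ell\xi-\beta e\xi+\gamma\ell^2$. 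Exploiting $\mu$-semistability to annihilate $h^0$ of the relevant negative twists, together with Serre duality and the cohomology of line bundles recalled in Section~\ref{sFano}, I would then compute $h^q\big(F,\cE\otimes\cN\big)$ for each line bundle $\cN$ dual to a member of the collection. The point is that after all these vanishings the only free parameters left are $\delta=h^1(F,\cE(-e))$ and $\epsilon=h^1(F,\cE(-e-\xi))$, which is precisely why they enter as multiplicities in $\cC^{\bullet}$.

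Next I would apply the resolution of the diagonal $\Delta\subset F\times F$. Identifying $F\times F$ with $(\bP^1\times\bP^1)\times(\bF_1\times\bF_1)$, the structure sheaf $\cO_\Delta$ is the external product of the diagonal sheaves of the two factors, so its resolution is the tensor product of the Koszul resolution on the $\bP^1$ side and the Beilinson resolution on the $\bF_1$ side. Feeding $\cE$ into the associated spectral sequence produces a first page $E_1^{p,q}=H^q(F,\cE\otimes\mathcal A_p)\otimes\mathcal B_p$ abutting to $\cE$ in total degree $0$. Substituting the vanishings from the previous step kills the rows $q=0$ and $q=3$, leaving a double complex supported in the rows $q=1,2$; its total complex is a three-term complex in degrees $-1,0,1$, and convergence to $\cE$ in degree $0$ says exactly that this complex is a monad with middle cohomology $\cE$. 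The core computation is to identify its terms with $\cC^{-1},\cC^0,\cC^1$, reading the multiplicities off the surviving $h^1$ and $h^2$ groups via the Riemann–Roch bookkeeping above, and to check that the extreme maps have locally free kernel and cokernel so that the middle cohomology is the bundle $\cE$.

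For the converse I would argue directly from \eqref{Monad}. Additivity of the Chern character gives $\rk(\cE)=2$, $c_1(\cE)=-3\ell+e-2\xi$ and $c_2(\cE)=\alpha\ell\xi-\beta e\xi+\gamma\ell^2$, so $\cE$ already has the invariants of an instanton and, being $\mu$-semistable by hypothesis, will qualify once the cohomological conditions are checked. Setting $\cK:=\ker(\cC^0\to\cC^1)$ splits the monad into $0\to\cK\to\cC^0\to\cC^1\to0$ and $0\to\cC^{-1}\to\cK\to\cE\to0$; chasing cohomology along these two sequences, using the explicit cohomology of the line bundle summands of $\cC^{\bullet}$ (and of their twists by $-e$ and $-e-\xi$) and $\mu$-semistability to kill the relevant $h^0$'s, yields $h^0(F,\cE)=h^1(F,\cE)=0$ and the bounds (1) and (2). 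These are inequalities rather than equalities because each relevant group is a subquotient of the corresponding term on the first page of the monad's hypercohomology spectral sequence.

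The step I expect to be the main obstacle is item (3): the uniform vanishing $h^1\big(F,\cE(-D)\big)=0$ for every effective integral divisor $D\notin|e|$. This cannot follow from a single Euler-characteristic computation; it requires classifying the numerical classes of irreducible divisors on $F$ and, class by class, controlling the cohomology of the twisted summands of $\cC^{\bullet}(-D)$ so that the hypercohomology spectral sequence of the monad forces $h^1\big(F,\cE(-D)\big)=0$. The special status of $e$—rigid, with $e^2=-1$ on $\bF_1$, so that $|e|$ is the single divisor $e$—is exactly what allows $\delta$ and $\epsilon$ to be positive while every other twist vanishes, and making this dichotomy precise is the technical heart of the converse.
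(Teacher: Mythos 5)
Your proposal follows essentially the same route as the paper: there the monad is produced from the right dual of a full exceptional collection of line bundles on $F$ (obtained by applying Orlov's projective--bundle theorem twice, to $\pi_0$ and to $p$), which is exactly the Beilinson-type spectral sequence you describe via the resolution of the diagonal of the product, and the first page is computed with the same mix of $\mu$--semistability (Hoppe's criterion), Serre duality and Riemann--Roch, leaving only $\delta$ and $\epsilon$ as the undetermined multiplicities. The converse is likewise handled as you propose, by splitting the monad into the two short exact sequences through $\cK$ and, for item (3), classifying effective integral divisors $D\in\vert a\ell-be+c\xi\vert$ (showing $a,c,a-b\ge0$ and that $b<0$ forces $D=E$) before checking the relevant $h^2$--vanishings on $\bF_1$ via K\"unneth.
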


An almost immediate consequence of Theorem \ref{tMonad} is that an instanton bundle $\cE$ on $F$ (if any) satisfies $h^1\big(F,\cE(-D)\big)=0$ whenever $\vert D\vert\ne\emptyset$ contains smooth integral elements if and only if $h^1\big({F},\cE(-e)\big)=0$ (see  Corollary \ref{cBound}): following \cite{C--C--G--M} we call such bundles earnest.

 It is easy to construct examples of decomposable instanton bundles (see Example \ref{eDecomposable}). Thus, it is natural to ask if indecomposable instanton bundles actually exist on $F$.
 
We positively answer this question in Section \ref{sInstanton} showing that for $\alpha, \beta,\gamma\in \bZ$ satisfying inequalities \eqref{BoundInstanton1}, there exists a $\mu$--stable instanton bundle $\cE$ on $F$ with $c_2(\cE)=\alpha\ell\xi-\beta e\xi+\gamma \ell^2$ via the classical Serre correspondence (see Construction \ref{conInstanton}): in particular, monads $\cC^\bullet$ as in the previous statement actually exist. 

More precisely we prove the following result.

\begin{theorem}
\label{tConstruction}
For each choice of integers $\alpha,\beta,\gamma$ satisfying inequalities \eqref{BoundInstanton1} the bundle $\cE$ obtained via Construction \ref{conInstanton} is a $\mu$--stable instanton bundle with $c_2(\cE)=\alpha\ell\xi-\beta e\xi+\gamma\ell^2$, such that $\cE\otimes\cO_L\cong\cO_{\p1}\oplus\cO_{\p1}(-1)$ for the general line $L\subseteq F$. Moreover, the following assertions hold.
\begin{itemize}
\item If $\beta\ge1$, then $\cE$ is earnest.
\item If $\beta\le0$, then $\cE$  is non--earnest and 
$$
-\beta\le h^1\big(F,\cE(-e-\xi)\big)\le h^1\big(F,\cE(-e)\big)=1-\beta.
$$
\end{itemize}
\end{theorem}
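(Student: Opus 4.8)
The plan is to deduce every assertion from the short exact sequence that Construction~\ref{conInstanton} attaches to $\cE$ through the classical Serre correspondence, namely
\begin{equation*}
0\longrightarrow\cO_F(D_1)\longrightarrow\cE\longrightarrow\cI_Y(D_2)\longrightarrow0,
\end{equation*}
where $Y\subseteq F$ is the zero locus of the defining section and $D_1,D_2\in\Pic(F)$ satisfy $D_1+D_2=-h$. First I would confirm that $\cE$ is a rank two vector bundle: this is the usual Serre condition, i.e. $\cE(-D_1)$ is globally generated and $Y$ is the vanishing of a regular section, which is exactly what inequalities \eqref{BoundInstanton1} secure. The equality $c_1(\cE)=D_1+D_2=-h=(2q_F-i_F)h$ is immediate, and $c_2(\cE)=[Y]+D_1\cdot D_2$ is read off from the total Chern class; plugging in the class of $Y$ prescribed by the construction and using $\ell^2=-e^2$, $\ell e=\xi^2=0$ and $\int_F\ell^2\xi=1$ returns $c_2(\cE)=\alpha\ell\xi-\beta e\xi+\gamma\ell^2$.

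I would then prove $\mu$--stability. Since $h$ is primitive but $h^3=48$ is even, a parity argument does not rule out strict semistability, so instead I would use the criterion that $\cE$ is $\mu$--stable as soon as $h^0\big(F,\cE(-D)\big)=0$ for every $D$ with $D\cdot h^2\ge\tfrac12c_1(\cE)\cdot h^2=-24$. Each such vanishing is tested on the twist of the sequence above by $-D$: the group $h^0\big(F,\cE(-D)\big)$ vanishes once $h^0\big(F,\cO_F(D_1-D)\big)=0$ and $h^0\big(F,\cI_Y(D_2-D)\big)=0$, the first failing only when $D_1-D$ is effective and the second being governed by the geometry of $Y$, so that only the finitely many borderline classes with $D\cdot h^2=-24$ require individual inspection. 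Once stability is in place, $h^0\big(F,\cE\big)=0$ because $\mu(\cE)<0$, while $h^1\big(F,\cE\big)=0$ follows from the same sequence together with $H^1\big(F,\cO_F(D_1)\big)=0$ (a Künneth computation on $\p1\times\bF_1$) and $H^1\big(F,\cI_Y(D_2)\big)=0$ (the projective normality of $Y$ built into the construction). With $c_1(\cE)=-h$ and $\mu$--stability this realises $\cE$ as an instanton bundle.

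For the generic splitting type I would use that the lines of $F$, the integral curves $L$ with $h\cdot L=1$, are exactly the curves of class $e\xi$, i.e. $L=e\times\{t\}$ with $t\in\p1$; along any of them $\deg\big(\cE\vert_L\big)=c_1(\cE)\cdot L=-1$, so $\cE\vert_L\cong\cO_{\p1}(a)\oplus\cO_{\p1}(-1-a)$ with $a\ge0$. Restricting the defining sequence to a general $L$, which is disjoint from $Y$ and satisfies $D_1\cdot L=0$, gives $0\to\cO_{\p1}\to\cE\vert_L\to\cO_{\p1}(-1)\to0$; as $\Ext^1\big(\cO_{\p1}(-1),\cO_{\p1}\big)=H^1\big(\p1,\cO_{\p1}(1)\big)=0$, this splits, so $\cE\vert_L\cong\cO_{\p1}\oplus\cO_{\p1}(-1)$ for the general line.

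The technical heart is the last dichotomy. Restricting $\cE$ to a surface $D_e\in\vert e\vert$, which is isomorphic to $\p1\times\p1$, gives $0\to\cE(-e)\to\cE\to\cE\vert_{D_e}\to0$, and since $h^0\big(F,\cE\big)=h^1\big(F,\cE\big)=0$ this identifies $h^1\big(F,\cE(-e)\big)=h^0\big(D_e,\cE\vert_{D_e}\big)$. The analogous sequence twisted by $-\xi$, combined with $h^0\big(F,\cE(-\xi)\big)=0$ and the vanishing $h^1\big(F,\cE(-\xi)\big)=0$ supplied by Theorem~\ref{tMonad} (valid as $\xi$ is effective, integral and $\xi\notin\vert e\vert$), identifies $h^1\big(F,\cE(-e-\xi)\big)=h^0\big(D_e,\cE\vert_{D_e}(-\xi)\big)$. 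Since $\xi\vert_{D_e}$ is a ruling of $\p1\times\p1$, twisting down gives at once $h^1\big(F,\cE(-e-\xi)\big)\le h^1\big(F,\cE(-e)\big)$, so everything reduces to the two numbers $h^0\big(D_e,\cE\vert_{D_e}\big)$ and $h^0\big(D_e,\cE\vert_{D_e}(-\xi)\big)$ on $\p1\times\p1$. The main obstacle is computing these \emph{exactly}: I expect to identify $\cE\vert_{D_e}$ from the restricted Serre sequence, whereupon the coefficient $-\beta$ of $e\xi$ in $c_2(\cE)$ enters precisely because $e\xi$ is the class of the lines lying in $D_e$, yielding $h^0\big(D_e,\cE\vert_{D_e}\big)=0$ for $\beta\ge1$ (earnestness, via Corollary~\ref{cBound}) and $h^0\big(D_e,\cE\vert_{D_e}\big)=1-\beta$ together with $h^0\big(D_e,\cE\vert_{D_e}(-\xi)\big)\ge-\beta$ for $\beta\le0$. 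Securing these equalities rather than mere inequalities forces one to control how the Serre curve $Y$ meets $D_e$, which is why the section defining $Y$ in Construction~\ref{conInstanton} must be chosen generic enough to guarantee the expected transverse intersections.
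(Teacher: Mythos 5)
Your architecture (Serre sequence, Hoppe's criterion via Lemma \ref{lHoppe}, restriction to a general line) matches the paper's, and the Chern class computation and the splitting type $\cO_{\p1}\oplus\cO_{\p1}(-1)$ are handled correctly. The genuine gap is in what you yourself call the technical heart. For $\beta\le0$, Construction \ref{conInstanton} takes $B_1,\dots,B_{1-\beta}$ to be \emph{lines}, and by Example \ref{eLine} every line of $F$ lies inside the unique divisor $E\in\vert e\vert$. So when you restrict the Serre sequence to $D_e=E$ the intersection of $Z$ with $E$ is not transverse: $1-\beta$ entire components of $Z$ are contained in $E$, the sheaf $\cI_{Z\vert F}\otimes\cO_E$ acquires torsion along them, and the restricted sequence does not present $\cE\vert_E$ as an extension of a twist of an ideal sheaf by a line bundle on $\p1\times\p1$. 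Your closing sentence, which pins the exact counts on choosing the section generic enough to guarantee ``the expected transverse intersections,'' is therefore based on a false picture precisely in the case where $\delta=1-\beta>0$ has to appear: the positivity of $\delta$ is caused by components of $Z$ lying \emph{inside} $E$, not by how $Z$ meets $E$. The paper never restricts to $E$ for this computation; it tensors sequences \eqref{seqNonEarnest} and \eqref{seqIdeal} by $\cO_F(-e-\xi)$ and $\cO_F(-\ell-2\xi)$ and reduces both $\delta$ and $\epsilon$ to $h^0\big(Z,\cO_F(-\ell-\xi)\otimes\cO_Z\big)$ and $h^0\big(Z,\cO_F(-\ell-2\xi)\otimes\cO_Z\big)$, which equal $1-\beta$ because the relevant degree is $0$ on each line and negative on the components in $\Gamma_A$ and $\Gamma_C$; the lower bound $\epsilon\ge-\beta$ then comes from the single unit of $h^2\big(F,\cO_F(-2\ell-e-2\xi)\big)$ in the long exact sequence. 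You would need either to adopt that direct computation or to explicitly handle the torsion of $\cI_{Z\vert F}\otimes\cO_E$; as written, your plan does not produce the equality $h^1\big(F,\cE(-e)\big)=1-\beta$.

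A secondary gap concerns $\mu$--stability. With $D=a\ell-be+c\xi$ the slope condition $D h^2\ge-24$ reads $3a+2c+6\ge b$, and your claim that only the finitely many borderline classes with $Dh^2=-24$ need individual inspection is not accurate: in the case $\beta\le0$ the classes for which $h^0\big(F,\cO_F(D_2-D)\big)$ could be nonzero include non--borderline ones (for instance $c=-1$ with $b=3a+2$ or $b=3a+3$), and each is eliminated only by a separate geometric argument — no component of $Z$ lies in a divisor of $\vert\xi\vert$, of $\vert e\vert$ or of $\vert\ell-e\vert$, $\deg Z\ge3$, and so on — which in turn dictates how general $Z$ must be chosen. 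Likewise, ``projective normality of $Y$'' is not what gives $h^1\big(F,\cI_{Z\vert F}(D_2)\big)=0$; the correct mechanism is the identification of this group with $h^0\big(Z,\cO_F(D_2)\otimes\cO_Z\big)$, which vanishes because the restriction of $\cO_F(D_2)$ has degree $-1$ on every component of $Z$. These points are fixable, but they are exactly where the content of the proof lives, and the proposal leaves them unproved.
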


Thanks to \cite[Theorem 1.8]{A--C--G} each indecomposable instanton bundle on a Fano threefold is simple. Let $\cS_F(2;-h,\alpha\ell\xi-\beta e\xi+\gamma \ell^2)$ be the moduli space of simple rank $2$ bundles $\cE$ on $F$ with $c_1(\cE)=-h$ and $c_2(\cE)=\alpha\ell\xi-\beta e\xi+\gamma \ell^2$ (see \cite{A--K}) and denote by
$$
\cS\cI_F(\alpha\ell\xi-\beta e\xi+\gamma \ell^2)\subseteq\cS_F(2;-h,\alpha\ell\xi-\beta e\xi+\gamma \ell^2)
$$
the locus of points corresponding to instanton bundles. The semicontinuity of $h^1\big(F,\cE\big)$ implies that such a locus is open, hence it can be considered as a moduli space for instanton bundles on $F$ with second Chern class as above. 

The instanton bundles obtained via Construction \ref{conInstanton}, being $\mu$--stable, are simple. In Proposition \ref{pModuli} we show that they correspond to smooth points of the same component inside $\cS\cI_F(\alpha\ell\xi-\beta e\xi+\gamma \ell^2)$. 

In Remark \ref{rBound} we recall that necessarily $c_2(\cE)h\ge14$ for each instanton bundle $\cE$ on $F$.  In Section \ref{sMinimal} we completely characterize instanton bundles for which the equality holds. In order to deal with such a characterization, notice that the projection $\p2\dashrightarrow \p1$ from the blown up point induces a morphism $\pi_0\colon {\bF_1}\to\p1$, hence there is a surjective morphism $\varphi:=id\times \pi_0\colon F\to Q:=\p1\times\p1$. If $q_i\colon Q\to\p1$ is the projection on the $i^{\mathrm{th}}$ factor,  we set $\cO_Q(\sigma_i):=q^*_i\cO_{\p1}(1)$. 

The main result of Section \ref{sMinimal}  is the following.

\begin{theorem}
\label{tMinimal}
There exist instanton bundles $\cE$ on $F$ such that $c_2(\cE)h=14$.

Every such an instanton bundle is aCM, $\mu$--stable, earnest, satisfies $\cE\otimes\cO_L\cong\cO_{\p1}\oplus\cO_{\p1}(-1)$ for the general line $L\subseteq F$ and $c_2(\cE)$ is either $4\ell\xi-2 e\xi+2 \ell^2$ or $3\ell\xi- e\xi+3 \ell^2$.
\begin{itemize}
\item $c_2(\cE)=4\ell\xi-2 e\xi+2 \ell^2$ if and only if  $\cE(\ell)=\varphi^*\cA$ where $\cA$ is $\mu$--stable with respect to $\cO_Q(\sigma_1+\sigma_2)$  such that $c_1(\cA)=-2\sigma_1-\sigma_2$ and $c_2(\cA)=2$. 
\item $c_2(\cE)=3\ell\xi-e\xi+3 \ell^2$ if and only if  $\cE(\xi)=p^*\cB$ where $\cB$ is $\mu$--stable with respect to  $\cO_{\bF_1}(2\ell-e)$  such that $c_1(\cB)=-3\ell+e$ and $c_2(\cB)=3$. 
\end{itemize}

Finally, both $\cS\cI_F(3\ell\xi-e\xi+3 \ell^2)$ and $\cS\cI_F(4\ell\xi-2e\xi+2 \ell^2)$ are smooth, irreducible and rational of dimension $1$.
\end{theorem}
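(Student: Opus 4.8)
The plan is to pin down $c_2(\cE)$ numerically, then to extract the two pullback descriptions from the monad of Theorem~\ref{tMonad}, and finally to deduce every remaining assertion---including the structure of the moduli spaces---from these descriptions. First I would compute, using $\ell^3=\ell e=\xi^2=0$ and $e^2\xi=-\ell^2\xi$, that $c_2(\cE)\cdot h=3\alpha-\beta+2\gamma$. Imposing $c_2(\cE)h=14$ together with the necessary inequalities \eqref{BoundInstanton1} gives both $\alpha+\gamma\ge 6$ and (from $\beta\le\alpha-2$) $2\alpha+2\gamma\le 12$, whence $\alpha+\gamma=6$; the only solutions are $(\alpha,\beta,\gamma)=(3,1,3)$ and $(4,2,2)$, which are exactly the two values of $c_2$ in the statement. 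As both triples satisfy \eqref{BoundInstanton1}, existence is immediate from Theorem~\ref{tConstruction}.

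The heart of the argument is to show that the invariants of Theorem~\ref{tMonad} satisfy $\delta=\epsilon=0$ for every minimal instanton. Granting this, the monad $\cC^{\bullet}$ collapses: when $c_2(\cE)=3\ell\xi-e\xi+3\ell^2$ one has $\cC^{-1}=0$, so $\cE=\ker\big(\cO_F(-\ell-\xi)^{\oplus2}\oplus\cO_F(-\ell+e-\xi)\to\cO_F(-\xi)\big)$; after twisting by $\xi$, every term and every entry of the defining map---which lies in $H^0(\cO_F(\ell))$ or $H^0(\cO_F(\ell-e))$, both pulled back from $\bF_1$---is a pullback along $p$, so $\cE(\xi)=p^{*}\cB$ with $\cB=\ker\big(\cO_{\bF_1}(-\ell)^{\oplus2}\oplus\cO_{\bF_1}(-\ell+e)\to\cO_{\bF_1}\big)$, and a Chern class computation gives $c_1(\cB)=-3\ell+e$, $c_2(\cB)=3$. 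The case $c_2(\cE)=4\ell\xi-2e\xi+2\ell^2$ is identical with $\varphi$ in place of $p$: here $\cE(\ell)=\ker\big(\cO_F(-\xi)^{\oplus2}\oplus\cO_F(-\ell+e)\to\cO_F\big)$, and since $\varphi^{*}\sigma_1=\xi$ and $\varphi^{*}\sigma_2=\ell-e$ both the terms and the map descend along $\varphi$, yielding $\cE(\ell)=\varphi^{*}\cA$ with $c_1(\cA)=-2\sigma_1-\sigma_2$, $c_2(\cA)=2$. The converse implications are a direct verification: for $\cA$ (resp. $\cB$) $\mu$-stable with these invariants one sets $\cE:=(\varphi^{*}\cA)(-\ell)$ (resp. $(p^{*}\cB)(-\xi)$) and checks, via the projection formula and base cohomology, that $c_1(\cE)=-h$, that $c_2(\cE)$ is as claimed, and that $\cE$ is $\mu$-semistable with the instanton vanishings, hence lies in $\cS\cI_F$.

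The main obstacle is the vanishing $\delta=\epsilon=0$. For $\delta=h^1\big(F,\cE(-e)\big)$ I would use the restriction sequence $0\to\cE(-e)\to\cE\to\cE|_e\to0$: since $h^0(F,\cE)=0$ and $h^1(F,\cE)=0$ (the latter being the instanton condition, as $q_F=0$), the long exact sequence yields an isomorphism $H^1\big(F,\cE(-e)\big)\cong H^0\big(e,\cE|_e\big)$, so earnestness is equivalent to $\cE|_e$ having no global sections. On the surface $e\cong Q$ one finds $c_1(\cE|_e)=-2f_1-f_2$ (with $f_1,f_2$ the two rulings), a class of negative degree against the ample generator; I expect the absence of sections to follow from the $\mu$-semistability of $\cE$---a section would give a degree-zero subsheaf of $\cE|_e$ contradicting semistability---together with the minimality of $c_2(\cE)h$, sharpening the bound of Remark~\ref{rBound}. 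The invariant $\epsilon$ is handled by the same sequence twisted by $-\xi$. This is the step I expect to require the most care, as $e$ is not ample and no restriction theorem applies directly. Once $\delta=\epsilon=0$ is known, the remaining properties follow from the pullback descriptions: aCM-ness by pushing forward along the $\p1$-fibration (the higher direct images vanish in the relevant degrees), $\mu$-stability from that of $\cA$ (resp. $\cB$) together with the simplicity of indecomposable instantons \cite{A--C--G}, the splitting $\cE\otimes\cO_L\cong\cO_{\p1}\oplus\cO_{\p1}(-1)$ by restriction to a general line, and earnestness as the case $\delta=0$ of Corollary~\ref{cBound}.

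Finally, the two constructions are mutually inverse---$\cB=p_{*}\big(\cE(\xi)\big)$ and $\cA=\varphi_{*}\big(\cE(\ell)\big)$ recover the base bundles---and work in families, so they induce isomorphisms $\cS\cI_F(3\ell\xi-e\xi+3\ell^2)\cong M_{\bF_1}$ and $\cS\cI_F(4\ell\xi-2e\xi+2\ell^2)\cong M_Q$, where $M_{\bF_1}$ and $M_Q$ denote the moduli spaces of $\mu$-stable $\cB$ and $\cA$ with the invariants above. A Hirzebruch--Riemann--Roch computation gives $\sum_i(-1)^i\dim\Ext^i(\cA,\cA)=\sum_i(-1)^i\dim\Ext^i(\cB,\cB)=0$, while $\mu$-stability and Serre duality give $\Ext^2(\cA,\cA)=\Ext^2(\cB,\cB)=0$ (the twist by $\omega$ strictly lowers the slope, so there are no nonzero homomorphisms); as stable bundles are simple, this forces $\dim\Ext^1(\cA,\cA)=\dim\Ext^1(\cB,\cB)=1$, and the moduli spaces are smooth of dimension $1$. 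For irreducibility and rationality I would use the kernel presentations of the second paragraph: the irreducible rational variety of their admissible defining maps dominates each moduli space, so $M_{\bF_1}$ and $M_Q$ are irreducible and unirational, hence---being smooth curves---rational by L\"uroth's theorem. This final paragraph is essentially bookkeeping once the equivalence of the moduli functors is in place.
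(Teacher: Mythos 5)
Your overall strategy (pin down $(\alpha,\beta,\gamma)$ numerically, prove $\delta=\epsilon=0$, collapse the monad of Theorem \ref{tMonad} to a kernel presentation that visibly descends along $p$ or $\varphi$, then transport the moduli problem to the base) is the same as the paper's, and the numerology in your first paragraph, the descent argument, and the moduli-space paragraph are all essentially correct. But there is a genuine gap at the step you yourself flag as the hardest one, namely the vanishing $\delta=h^1\big(F,\cE(-e)\big)=0$. Your reduction $H^1\big(F,\cE(-e)\big)\cong H^0\big(E,\cE\otimes\cO_E\big)$ is fine, but the proposed mechanism for killing $H^0\big(E,\cE\otimes\cO_E\big)$ does not work: $E$ is not ample (indeed $\vert e\vert$ consists of the single divisor $E$), so $\mu$--semistability of $\cE$ on $F$ gives no control over $\cE\otimes\cO_E$, and a rank $2$ bundle on $E\cong\p1\times\p1$ with $c_1=-2f_1-f_2$ can certainly have sections (e.g.\ $\cO_E\oplus\cO_E(-2f_1-f_2)$); a nonzero section only produces a subsheaf of the \emph{restriction}, which contradicts nothing. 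The paper's actual argument is different and is where minimality genuinely enters: it uses the Koszul sequence $0\to\cE(-\ell)\to\cE(e-\ell)\oplus\cE\to\cE(e)\to0$ together with Serre duality \eqref{Serre1}, so that $\delta=h^2\big(F,\cE(e)\big)$ is bounded by $h^1\big(F,\cE(\ell-e)\big)+h^1\big(F,\cE\big)+h^0\big(F,\cE(\ell)\big)$, and the first term equals $\alpha-\beta-2$ by Proposition \ref{pTable}, which vanishes precisely because $c_2(\cE)h=14$ forces $\beta=\alpha-2$. Without this (or an equivalent) input your proof of earnestness, and hence the collapse of the monad on which everything downstream depends, is incomplete.

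A secondary soft spot: you assert the $\mu$--stability of $\cE$ follows from that of $\cA$ (resp.\ $\cB$) "together with simplicity". Simplicity does not imply $\mu$--stability, and the pullback of a $\mu$--stable bundle along a fibration is not automatically $\mu$--stable for the polarization upstairs; the paper has to run Lemma \ref{lHoppe} explicitly in both directions (for $\varphi_*\cE(\ell)$ and for $\varphi^*\cA(-\ell)$), disposing of finitely many exceptional twists $(a,b,c)$ by hand. Similarly, the aCM statement is only sketched in your proposal; the pushforward along $\varphi$ does not immediately give all the required vanishings because $\cO_F(h)$ is not pulled back from the base, and the paper instead verifies them directly from the dual of the kernel sequence and the K\"unneth formula.
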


We finally make some general comments on minimal instanton, weakly Ulrich, Ulrich and aCM bundles on Fano threefolds (see Section \ref{sMinimal} for the definitions and more details on the partial results that we obtain).

\subsection{Acknowledgements}
The authors express their thanks to the reviewers for their questions, remarks, suggestions which have considerably improved the whole exposition.

\section{Preliminary results}
\label{sGeneral}
Let $X$ be any smooth variety with canonical line bundle $\omega_X$. The Riemann--Roch formulas for a vector bundle $\cF$ on a smooth variety $X$ of dimensions $3$ and $2$ are respectively
\begin{align}
  \label{RRgeneral}
 & \begin{aligned}
    \chi(\cF)&=\rk(\cF)\chi(\cO_X)+{\frac16}(c_1(\cF)^3-3c_1(\cF)c_2(\cF)+3c_3(\cF))\\
    &-{\frac14}(\omega_Xc_1(\cF)^2-2\omega_Xc_2(\cF))+{\frac1{12}}(\omega_X^2c_1(\cF)+c_2(\Omega_{X}) c_1(\cF))
  \end{aligned}\\
  \label{RRsurface}
 &   \chi(\cF)=\rk(\cF)\chi(\cO_X)+{\frac12}c_1(\cF)^2-{\frac12}\omega_Xc_1(\cF)-c_2(\cF)
\end{align}
(see \cite[Theorem A.4.1]{Ha2}).

If $\cF$ and $\cG$ are coherent sheaves on $X$, then  the Serre duality holds
\begin{equation}
\label{Serre}
\Ext_X^i\big(\cG,\cF\otimes\omega_X\big)\cong \Ext_X^{\dim(X)-i}\big(\cF,\cG\big)^\vee
\end{equation}
(see \cite[Proposition 7.4]{Ha3}). 

Let $\cF$ be a vector bundle of rank $2$ on a smooth variety $X$ of dimension $n$ and let $s\in H^0\big(X,\cF\big)$. If we consider its zero locus
$(s)_0\subseteq X$ then we observe that it is either empty or its codimension is at most
$2$. In particular we can write $(s)_0=Z\cup S$ as the union of two components
where $Z$ has pure codimension $2$ (or it is empty) and $S$ is either empty or an effective divisor. Thus $\cF(-S)$ has a section vanishing
on $Z$ and we can consider its Koszul complex 
\begin{equation}
  \label{seqSerre}
  0\longrightarrow \cO_X(S)\longrightarrow \cF\longrightarrow \cI_{Z\vert X}(-S)\otimes\det(\cF)\longrightarrow 0.
\end{equation}
Sequence \ref{seqSerre} tensored by $\cO_Z$ yields $\cI_{Z\vert X}/\cI^2_{Z\vert X}\cong\cF^\vee(S)\otimes\cO_Z$, whence
\begin{equation}
\label{Normal}
\cN_{Z\vert X}\cong\cF(-S)\otimes\cO_Z.
\end{equation}
Notice that if $S=0$, then $Z$ is locally complete intersection inside $X$ and it has no embedded components.

The above construction can be reversed thanks to the Serre correspondence as follows.

\begin{theorem}
  \label{tSerre}
Let $X$ be a smooth variety of dimension $n\ge2$ and let $Z\subseteq X$ be a locally complete intersection subscheme of dimension $n-2$.  

If $\det(\cN_{Z\vert X})\cong\cO_Z\otimes\mathcal L$ for some $\mathcal L\in\Pic(X)$ such that $h^2\big(X,\mathcal L^\vee\big)=0$, then there exist vector bundles $\cF$ of rank $2$ on $X$ such that:
  \begin{enumerate}
  \item $\det(\cF)\cong\mathcal L$;
  \item $\cF$ has a section $s$ such that $Z$ coincides with the zero locus $(s)_0$ of $s$.
  \end{enumerate}
Moreover, the vector bundles $\cF$ as above are parameterized by a projective space of dimension $h^1\big(X,{\mathcal L}^\vee\big)$.
\end{theorem}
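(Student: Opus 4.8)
The plan is to reverse the Koszul construction of sequence \eqref{seqSerre} (taken with $S=0$) by producing $\cF$ as the middle term of an extension
\[
0\longrightarrow \cO_X\longrightarrow \cF\longrightarrow \cI_{Z\vert X}\otimes\mathcal L\longrightarrow0,
\]
so that the image $s$ of $1\in H^0\big(X,\cO_X\big)$ is a section, and taking determinants along the sequence forces $\det(\cF)\cong\mathcal L$. Such extensions are classified by $\Ext^1_X\big(\cI_{Z\vert X}\otimes\mathcal L,\cO_X\big)\cong\Ext^1_X\big(\cI_{Z\vert X},\mathcal L^\vee\big)$, so the first task is to compute this group and then to isolate the classes whose middle term is actually locally free and cuts out exactly $Z$.

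To compute it I would apply $\Hom_X(-,\mathcal L^\vee)$ to the structure sequence $0\to\cI_{Z\vert X}\to\cO_X\to\cO_Z\to0$. Since $Z$ is a local complete intersection of codimension $2$, the fundamental local isomorphism gives $\sExt^q_X(\cO_Z,\cO_X)=0$ for $q\ne2$ and $\sExt^2_X(\cO_Z,\cO_X)\cong\det(\cN_{Z\vert X})$; combining this with the hypothesis $\det(\cN_{Z\vert X})\cong\cO_Z\otimes\mathcal L$ and the local-to-global spectral sequence yields $\Ext^q_X(\cO_Z,\mathcal L^\vee)=0$ for $q\le1$ and $\Ext^2_X(\cO_Z,\mathcal L^\vee)\cong H^0\big(Z,\cO_Z\big)$. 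Feeding these into the long exact $\Ext$ sequence and invoking the remaining hypothesis $h^2\big(X,\mathcal L^\vee\big)=0$ produces the short exact sequence
\[
0\longrightarrow H^1\big(X,\mathcal L^\vee\big)\longrightarrow \Ext^1_X\big(\cI_{Z\vert X}\otimes\mathcal L,\cO_X\big)\longrightarrow H^0\big(Z,\cO_Z\big)\longrightarrow0,
\]
which both guarantees the existence of nonzero extension classes and pins down the size of the parameter space.

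The crux, which I expect to be the main obstacle, is the local freeness criterion: the rank $2$ sheaf $\cF$ attached to a class $e$ is a vector bundle precisely when the image $\bar e\in H^0\big(Z,\cO_Z\big)$ of $e$ under the second map above is nowhere vanishing, a condition checked stalkwise by demanding that $\bar e$ generate $\sExt^2_X(\cO_Z,\cO_X)\otimes\mathcal L^\vee\cong\cO_Z$ at each point of $Z$. For such an $e$ the cokernel of $s$ is $\cI_{Z\vert X}\otimes\mathcal L$, so $(s)_0=Z$ and $\det(\cF)\cong\mathcal L$, establishing assertions (1) and (2). Finally, rescaling $e$ by a nonzero constant yields an isomorphic pair, so the resulting bundles are parameterized by the locus of such classes inside $\bP\big(\Ext^1_X(\cI_{Z\vert X}\otimes\mathcal L,\cO_X)\big)$; when $Z$ is connected one has $H^0\big(Z,\cO_Z\big)\cong\bC$, every nonzero $\bar e$ is a unit, and the displayed short exact sequence gives $\dim\bP\big(\Ext^1_X(\cI_{Z\vert X}\otimes\mathcal L,\cO_X)\big)=h^1\big(X,\mathcal L^\vee\big)$, as claimed.
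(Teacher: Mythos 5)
Your argument is correct and is precisely the standard Hartshorne--Serre computation that the paper delegates to the proof of \cite[Theorem 1.1]{Ar} rather than reproving: the same extension group $\Ext^1_X\big(\cI_{Z\vert X}\otimes\mathcal L,\cO_X\big)$, the same use of the fundamental local isomorphism for a codimension--$2$ lci together with the hypothesis $h^2\big(X,\mathcal L^\vee\big)=0$, and the same nowhere--vanishing criterion for local freeness. Your closing caveat is in fact a point where you are more careful than the statement itself: for disconnected $Z$ (the case actually used in Construction \ref{conInstanton}) one has $\dim\Ext^1_X\big(\cI_{Z\vert X}\otimes\mathcal L,\cO_X\big)=h^1\big(X,\mathcal L^\vee\big)+h^0\big(Z,\cO_Z\big)$ and the bundles correspond only to the open locus of classes with nowhere--vanishing image in $H^0\big(Z,\cO_Z\big)$, so the phrase \lq\lq parameterized by a projective space of dimension $h^1\big(X,\mathcal L^\vee\big)$\rq\rq\ is literally accurate only for connected $Z$.
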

\begin{proof}
The vector bundles $\cF$ mentioned in the statement are parameterized by the points of the projectivized of $\Ext_X^1\big(\cI_{Z\vert X}\otimes\mathcal L,\cO_X\big)$ which has dimension $h^1\big(X,{\mathcal L}^\vee\big)$ as explained in the proof of \cite[Theorem 1.1]{Ar}: for further details see \cite{Ar}.
\end{proof}

A vector bundle $\cF$ on $X$ is {\sl simple} if $h^0\big(X,\cF\otimes\cF^\vee\big)=1$. In particular simple vector bundles are indecomposable.

\begin{lemma}
\label{lExt3}
If $\cF$ is a simple vector bundle on a smooth variety $X$ such that $h^0\big(X,\omega_X^{-1}\big)>0$ and $h^0\big(X,\omega_X^{\rk(\cF)}\big)=0$, then
$$
\Ext^{\dim(X)}_{X}\big(\cF,\cF\big)=0.
$$
\end{lemma}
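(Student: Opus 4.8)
The plan is to use Serre duality to convert the statement into a vanishing for a space of honest morphisms, and then to exploit simplicity together with the two numerical hypotheses. Applying \eqref{Serre} with $i=0$ and $\cG=\cF$ gives
$$
\Ext^{\dim(X)}_X\big(\cF,\cF\big)\cong \Hom_X\big(\cF,\cF\otimes\omega_X\big)^\vee,
$$
so it suffices to prove $\Hom_X(\cF,\cF\otimes\omega_X)=0$. First I would fix an arbitrary $\phi\in\Hom_X(\cF,\cF\otimes\omega_X)$ and aim to show $\phi=0$.

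The key idea is to compose $\phi$ with the section guaranteed by $h^0(X,\omega_X^{-1})>0$ in order to return to endomorphisms of $\cF$. Fix $0\neq t\in H^0(X,\omega_X^{-1})$; multiplication by $t$ is a nonzero, hence injective, map of line bundles $\omega_X\to\cO_X$, and tensoring with the locally free sheaf $\cF$ keeps it injective, giving $\mathrm{id}_\cF\otimes t\colon \cF\otimes\omega_X\to\cF$. Set $\psi:=(\mathrm{id}_\cF\otimes t)\circ\phi\in\Hom_X(\cF,\cF)$. Since $\cF$ is simple, $\Hom_X(\cF,\cF)=\bC\cdot\mathrm{id}_\cF$, so $\psi=\lambda\,\mathrm{id}_\cF$ for some $\lambda\in\bC$.

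Two cases remain, and the second numerical hypothesis enters through the determinant. If $\lambda=0$, then $(\mathrm{id}_\cF\otimes t)\circ\phi=0$, and injectivity of $\mathrm{id}_\cF\otimes t$ forces $\phi=0$, as desired. If $\lambda\neq0$, then $\psi$ is an isomorphism, so $\phi$ is a split injection between vector bundles of the same rank $\rk(\cF)$, hence an isomorphism; taking determinants then yields an isomorphism $\det(\cF)\longrightarrow\det(\cF)\otimes\omega_X^{\rk(\cF)}$, i.e. $\omega_X^{\rk(\cF)}\cong\cO_X$, contradicting $h^0(X,\omega_X^{\rk(\cF)})=0$. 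The main point to get right is this case analysis: one must check that $\mathrm{id}_\cF\otimes t$ is genuinely injective (so that a vanishing scalar kills $\phi$) and that a split monomorphism of equal rank is automatically an isomorphism (so that a nonzero scalar produces the forbidden section of $\omega_X^{\rk(\cF)}$). Everything else is formal once $\phi$ has been pushed back into the one--dimensional space $\Hom_X(\cF,\cF)$.
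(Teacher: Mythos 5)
Your argument is correct. The paper gives no proof of this lemma, deferring entirely to the citation \cite[Lemma 2.1]{C--C--G--M}, and your reasoning is the standard one for that statement: Serre duality reduces the claim to $\Hom_X\big(\cF,\cF\otimes\omega_X\big)=0$, composition with a nonzero section of $\omega_X^{-1}$ pushes any such $\phi$ into the one--dimensional space $\Hom_X(\cF,\cF)$, and the two cases $\lambda=0$ and $\lambda\ne0$ are handled correctly (injectivity of $\mathrm{id}_\cF\otimes t$ on an integral variety in the first case, and the determinant of the resulting isomorphism producing a nowhere--vanishing section of $\omega_X^{\rk(\cF)}$ in the second). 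Your writeup is in fact a self--contained substitute for the external reference.
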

\begin{proof}
See \cite[Lemma 2.1]{C--C--G--M}.
\end{proof}

If $\cO_X(H)$ is an ample line bundle, then we define the {\sl slope} of a vector bundle $\cF$ on $X$ with respect to $\cO_X(H)$ as $\mu(\cF):= c_1(\cF)H^{n-1}/\rk(\cF)$. The bundle $\cF$ is {\sl $\mu$--stable} (resp. {\sl $\mu$--semistable}) with respect to $\cO_X(H)$ if  $\mu(\mathcal G) < \mu(\cF)$ (resp. $\mu(\mathcal G) \le \mu(\cF)$) for each subsheaf $\mathcal G$ with $0<\rk(\mathcal G)<\rk(\cF)$. Every $\mu$--stable bundle $\cF$ is simple (see \cite[Corollary 1.2.8]{H--L}). The following criterion for $\mu$--(semi)stability of a vector bundle on $F$ will be used in what follows.

\begin{lemma}
\label{lHoppe}
Let $\cF$ be a rank $2$ vector bundle on a smooth variety $X$.

Then $\cF$ is $\mu$--stable (resp. $\mu$--semistable) if and only if
$$
h^0\big(X,\cF(-D)\big)=0,
$$
for each divisor $D$ such that $\mu(\cO_X(D))\ge\mu(\cF)$ (resp. $\mu(\cO_X(D))>\mu(\cF)$).
\end{lemma}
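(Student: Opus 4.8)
The plan is to prove the two biconditionals together, handling the stable case (strict slope inequalities internally, hypothesis $\mu(\cO_X(D))\ge\mu(\cF)$) and the semistable case (hypothesis $\mu(\cO_X(D))>\mu(\cF)$) in parallel, since the only differences are whether the relevant inequalities are strict. Throughout I exploit that $\rk(\cF)=2$, so that every proper nonzero subsheaf has rank $1$; this is exactly what makes Hoppe's criterion an equivalence rather than merely a sufficient condition, and it is the feature I would flag to the reader.

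For necessity I would argue by contradiction. Suppose $\cF$ is $\mu$-stable (resp. $\mu$-semistable) but $h^0\big(X,\cF(-D)\big)\ne0$ for some divisor $D$ with $\mu(\cO_X(D))\ge\mu(\cF)$ (resp. $\mu(\cO_X(D))>\mu(\cF)$). A nonzero section of $\cF(-D)$ is the same as a nonzero morphism $\cO_X(D)\to\cF$; since $\cO_X(D)$ is a line bundle, the kernel is a torsion subsheaf of an invertible sheaf, hence zero. Thus $\cO_X(D)$ is a rank $1$ subsheaf of $\cF$ with $\mu(\cO_X(D))\ge\mu(\cF)$ (resp. $>$), directly contradicting $\mu$-stability (resp. $\mu$-semistability).

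For sufficiency I would argue by contraposition. If $\cF$ fails to be $\mu$-stable (resp. $\mu$-semistable), there is a subsheaf $\cG\subseteq\cF$ of rank $1$ with $\mu(\cG)\ge\mu(\cF)$ (resp. $>$). First I would replace $\cG$ by its saturation $\cG'$ in $\cF$, which only increases the slope and makes $\cF/\cG'$ torsion-free. Using that $X$ is smooth, the reflexive hull $\mathcal L:=(\cG')^{\vee\vee}$ of the rank $1$ torsion-free sheaf $\cG'$ is a line bundle, say $\mathcal L=\cO_X(D)$; since $\cG'$ and $\mathcal L$ agree outside a codimension $\ge2$ set, $c_1(\mathcal L)=c_1(\cG')$ and hence $\mu(\cO_X(D))=\mu(\cG')\ge\mu(\cF)$ (resp. $>$). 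Tensoring the inclusion $\cG'\hookrightarrow\cF$ by $\mathcal L^\vee$ gives a nonzero morphism $\cG'\otimes\mathcal L^\vee\to\cF(-D)$; passing to double duals and using $(\cG'\otimes\mathcal L^\vee)^{\vee\vee}\cong\cO_X$ together with $\big(\cF(-D)\big)^{\vee\vee}\cong\cF(-D)$ yields a morphism $\cO_X\to\cF(-D)$. This morphism is nonzero because $\cF(-D)$ is torsion-free, so the canonical map into its double dual is injective and cannot annihilate a nonzero morphism. Hence $h^0\big(X,\cF(-D)\big)\ne0$ for a divisor with $\mu(\cO_X(D))\ge\mu(\cF)$ (resp. $>$), which is the desired contradiction.

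The hard part is entirely in the sufficiency direction: replacing an arbitrary rank $1$ destabilizing subsheaf by a genuine invertible sheaf $\cO_X(D)$ of the same slope, and verifying that the destabilizing inclusion survives reflexivization as an honest global section. The two nontrivial inputs there are smoothness of $X$, which guarantees that a reflexive sheaf of rank $1$ is a line bundle, and the left-exactness of double dualization against a torsion-free target, which keeps the induced map $\cO_X\to\cF(-D)$ nonzero. Everything else reduces to bookkeeping with slopes and the rank $2$ dichotomy.
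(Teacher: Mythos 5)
Your proof is correct, but it is genuinely different from what the paper does: the paper disposes of this lemma in one line by citing it as a special case of Corollary 4 of Jardim--Menet--Prata--S\'a Earp \cite{J--M--P--S}, which is a higher-rank generalization of Hoppe's criterion phrased in terms of exterior powers, whereas you give the classical self-contained argument for rank $2$. Your two key reductions are exactly the right ones and are carried out correctly: a nonzero section of $\cF(-D)$ is a rank $1$ subsheaf $\cO_X(D)\hookrightarrow\cF$ because the kernel of a nonzero map out of a line bundle into a torsion-free sheaf is a torsion subsheaf of an invertible sheaf, hence zero; and conversely a destabilizing rank $1$ subsheaf can be promoted to an invertible one by saturating (which only raises the slope) and passing to the double dual, which on a smooth variety is a line bundle agreeing with the saturation in codimension $1$, so the first Chern class and hence the slope are unchanged, and the destabilizing inclusion survives as a genuine section since $\Hom$ into the locally free sheaf $\cF(-D)$ is unaffected by reflexivization of the source. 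A small simplification you could have used: a saturated subsheaf of a locally free sheaf with torsion-free quotient is already reflexive, so for rank $1$ it is itself a line bundle and the double-dual step can be skipped. The trade-off is the usual one: the citation keeps the paper short and covers all ranks at once, while your argument is elementary, transparent, and makes visible exactly where smoothness of $X$ and $\rk(\cF)=2$ enter.
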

\begin{proof}
The statement is a particular case of \cite[Corollary 4]{J--M--P--S}.
\end{proof}

Finally, we focus on the case of a Fano threefold $X$ with $i_X=1$. Let $\cE$ be a rank $2$ vector bundle with $c_1(\cE)=-h$. Equality \eqref{Serre} and the isomorphism $\cE^\vee\cong\cE(h)$ yield
\begin{equation}
\label{Serre1}
h^i\big(X,\cE(D)\big)=h^{3-i}\big(X,\cE(-D)\big)
\end{equation}
for each line bundle $\cO_F(D)\in\Pic(X)$. In particular $h^0\big(X,\cE\big)=h^3\big(X,\cE\big)$, hence the following lemma is easy to prove.

\begin{lemma}
\label{lNatural}
Let $X$ be a Fano threefold with $i_X=1$.

A $\mu$--semistable bundle $\cE$ of rank $2$ on $X$ such that $c_1(\cE)=-h$ is an instanton bundle if and only if $h^i\big(X,\cE\big)=0$ for each $i$.
\end{lemma}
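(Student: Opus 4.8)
The plan is to unwind the definition of instanton bundle in the special case $i_X=1$ and then invoke the Serre-duality relation \eqref{Serre1}. First I would record that for a Fano threefold with $i_X=1$ one has $q_X=\left[\frac{i_X}2\right]=0$, so the defining data of an instanton bundle collapse: the twist $\cE(-q_Xh)$ becomes simply $\cE$, the prescribed first Chern class $(2q_X-i_X)h$ becomes $-h$, and $\mu$--semistability is the only remaining structural requirement. Thus, under the standing hypotheses of the lemma (namely $\cE$ of rank $2$, $\mu$--semistable, with $c_1(\cE)=-h$), being an instanton bundle is equivalent to the two cohomological vanishings $h^0\big(X,\cE\big)=h^1\big(X,\cE\big)=0$.

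Next I would apply \eqref{Serre1} with $D=0$, which gives $h^i\big(X,\cE\big)=h^{3-i}\big(X,\cE\big)$ for every $i$; in particular $h^0\big(X,\cE\big)=h^3\big(X,\cE\big)$ and $h^1\big(X,\cE\big)=h^2\big(X,\cE\big)$. This pairs the four cohomology groups into two, so that the simultaneous vanishing of $h^0$ and $h^1$ is automatically equivalent to the vanishing of all four $h^i\big(X,\cE\big)$.

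Putting these two observations together will yield the claimed equivalence. For the forward implication, an instanton bundle satisfies $h^0\big(X,\cE\big)=h^1\big(X,\cE\big)=0$ by the reformulated definition, and then $h^2\big(X,\cE\big)=h^3\big(X,\cE\big)=0$ follows from the duality; for the converse, vanishing of all $h^i\big(X,\cE\big)$ in particular forces $h^0\big(X,\cE\big)=h^1\big(X,\cE\big)=0$, which together with the assumed $\mu$--semistability and $c_1(\cE)=-h$ is exactly the definition of an instanton bundle. I do not expect any genuine obstacle here: the only point requiring care is the bookkeeping that $q_X=0$ precisely when $i_X=1$, so that the general instanton conditions really do reduce to the two vanishings $h^0\big(X,\cE\big)=h^1\big(X,\cE\big)=0$, after which \eqref{Serre1} does all the remaining work.
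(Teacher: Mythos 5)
Your proposal is correct and is essentially the argument the paper has in mind: the paper itself only cites \cite[Proposition 4.1]{C--C--G--M}, but the sentence immediately preceding the lemma (deriving $h^0\big(X,\cE\big)=h^3\big(X,\cE\big)$ from \eqref{Serre1}) signals exactly your reduction, namely that for $i_X=1$ one has $q_X=0$, the instanton conditions collapse to $h^0\big(X,\cE\big)=h^1\big(X,\cE\big)=0$, and Serre duality pairs $h^0$ with $h^3$ and $h^1$ with $h^2$. Your write-up is a complete and correct elaboration of that same route.
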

\begin{proof}
See \cite[Proposition 4.1]{C--C--G--M}.
\end{proof}

If $\cE$ is an indecomposable instanton bundle on $X$, then $\cE$ is simple (see \cite[Theorem 1.8]{A--C--G}). It follows that $h^0\big(X,\cE\otimes\cE^\vee\big)=1$ by definition and $h^3\big(X,\cE\otimes\cE^\vee\big)=0$ by Lemma \ref{lExt3}. Thus equality \eqref{RRgeneral} for $\cE\otimes\cE^\vee$ yields
\begin{equation}
\label{Ext12}
\dim\Ext^1_{X}\big(\cE,\cE\big)-\dim\Ext^2_{X}\big(\cE,\cE\big)=2c_2(\cE) h-\frac{\deg(X)}2-3,
\end{equation}
because $hc_2(\Omega_{X})={24}$ (see \cite[Exercise A.6.7]{Ha2}).

\section{The threefold $F$}
\label{sFano}
In this section we list some basic results on the  threefold $F:=\p1\times \bF_1$ where $\sigma\colon \bF_1\to\p2$ is the blow up at a fixed point. 

We now recall the notation and some facts from the Introduction about the morphisms $p\colon F\to\bF_1$, $\sigma\colon\bF_1\to\p2$, $\pi_0\colon {\bF_1}\to\p1$, $q_i\colon Q\to\p1$.

We will use the same symbols $\ell$ and $e$ for denoting both the classes of the pull--back of the general line via $\sigma$ and of the exceptional divisor of $\sigma$, and their pull--backs via $p$. 

There is an isomorphism ${\bF_1}\cong\bP(\cO_{\p1}\oplus\cO_{\p1}(-1))$ and $\pi_0$ coincides with the standard projective bundle morphism (see \cite[Example V.2.11.5]{Ha2}). It follows that the classes of a fibre of $\pi_0$ and of the unisecant of minimal self--intersection $-1$ inside $A^1(\bF_1)\cong\Pic(\bF_1)$ are $\ell-e$ and $e$ respectively. In particular $\vert e\vert$ contains a single divisor $E\cong\p1$. 

Finally, the line bundles $\cO_Q(\sigma_i)$ are exactly the standard spinor bundles on $Q$. We have $\Omega_{Q}\cong\cO_Q(-2\sigma_1)\oplus\cO_Q(-2\sigma_2)$, $\cO_F(\xi)\cong\varphi^*\cO_Q(\sigma_1)$, $\cO_F(\ell-e)\cong\varphi^*\cO_Q(\sigma_2)$ and $F\cong\bP(\cO_{Q}\oplus\cO_Q(-\sigma_2))$. In particular, $\cO_F(e)\cong \cO_{\bP(\cO_{Q}\oplus\cO_Q(-\sigma_2))}(1)$, hence $\Omega_{F\vert Q}\cong\cO_F(-\ell-e)$, thanks to \cite[Exercise III.8.4 (b)]{Ha2}. Thus a Chern classes computation on the exact sequence
$$
0\longrightarrow \varphi^*\Omega_{Q}\longrightarrow \Omega_{F}\longrightarrow \Omega_{F\vert Q}\longrightarrow0,
$$
yields $c_2(\Omega_{F})=c_2(\Omega_{F}^\vee)=6\ell\xi-2e\xi+4\ell^2$.

Since $u\ell-ve=u(\ell-e)+(u-v)e$, it follows from \cite[Exercise III.8.4 (a)]{Ha2} that
\begin{equation}
\label{dimh^0}
h^0\big({\bF_1},\cO_{\bF_1}(u\ell-ve)\big)=\left\lbrace\begin{array}{ll} 
{{u+2}\choose2}-{{v+1}\choose2}\quad&\text{if $u,u-v\ge 0$,}\\
0\quad&\text{otherwise.}\\
\end{array}\right.
\end{equation}

The K\"unneth formula gives
\begin{equation}
\label{Kuenneth}
\begin{aligned}
h^i\big(F,\cO_F(a\ell-be+c\xi)\big)&=h^i\big({\bF_1},\cO_{\bF_1}(a\ell-be)\big)h^0\big(\p1,\cO_{\p1}(c)\big)+\\
&+h^{i-1}\big({\bF_1},\cO_{\bF_1}(a\ell-be)\big)h^1\big(\p1,\cO_{\p1}(c)\big).
\end{aligned}
\end{equation}
In particular, the linear system $\vert e\vert$ on $F$ contains a single divisor $E$ which is the pull--back of $E\subseteq\bF_1$. By abuse of notation we still denote it by $E$. Notice that $E\cong\p1\times\p1$.

\begin{remark}
\label{rNEF}
The line bundles $\cO_F(\ell)$, $\cO_F(\ell-e)$, $\cO_F(\xi)$ are globally generated, hence numerically effective, because they are the pull--back of globally generated line bundles on either ${\bF_1}$ or $\p1$. 
\end{remark}

On $F$ there are several interesting families of lines, conics and rational cubics (with respect to $\cO_F(h)\cong\omega_F^{-1}$) that will be used later on for constructing instanton bundles.

\begin{example}
\label{eLine}
The first important family of curves is the one of lines.

If $\alpha\ell\xi-\beta e\xi+\gamma\ell^2\in A^2(F)$ is the class of the curve $L\subseteq F$, then both $L(3\ell-e)$ and $L\xi$ are non--negative thanks to Remark \ref{rNEF}. In particular, if $L$ is a line, then the equality  $1=Lh=L(3\ell-e)+2L\xi$ implies
$$
1=L(3\ell-e)=3\alpha-\beta,\qquad 0=L\xi=\gamma.
$$
It follows that $L$ is the intersection of an element in $\vert\xi\vert$ with the pull--back of a divisor $\Delta\subseteq {\bF_1}$ such that $\Delta(3\ell-e)=1$, i.e. $\Delta$ is a line with respect to the embedding ${\bF_1}\subseteq\p8$ induced by $\cO_{\bF_1}(3\ell-e)$. The equality $p_a(\Delta)=0$ then forces $\Delta=E$. 

Thus the class of $L$ inside $A^2(F)$ is $e\xi$ and there is a Koszul--type resolution 
$$
0\longrightarrow\cO_{F}(-e-\xi)\longrightarrow\cO_{F}(-e)\oplus\cO_{F}(-\xi)\longrightarrow\cI_{L\vert F}\longrightarrow0,
$$
In particular $\cI_{L\vert F}/\cI_{L\vert F}^2\cong\cI_{L\vert F}\otimes\cO_L\cong\cO_{\p1}\oplus\cO_{\p1}(1)$. Thus $\cN_{L\vert F}\cong\cO_{\p1}\oplus\cO_{\p1}(-1)$.

Let $\Lambda$ be the Hilbert scheme of lines on $F$. The above construction shows that $\Lambda\cong\p1$, hence it is irreducible.
\end{example}

\begin{example}
\label{eA}
The line bundles $\cO_{F}(\ell)$ and $\cO_F(\xi)$ are globally generated  (see Remark \ref{rNEF}) and each element in $\vert \ell\vert$ is isomorphic to $\p1\times\p1$. Each general element in $\vert \xi\vert$ intersects such a divisor along a curve $A$ which is isomorphic to a general line in $\p2$ via $\sigma\circ p$: notice that $Ah=3$, hence the Hilbert polynomial of such a curve $A$ is $\chi(\cO_A(t))=3t+1$. The class of $A$ in $A^2(F)$ is $\ell\xi$ and $\cI_{A\vert F}$ fits into the Koszul--type resolution
$$
0\longrightarrow\cO_{F}(-\ell-\xi)\longrightarrow\cO_{F}(-\ell)\oplus\cO_{F}(-\xi)\longrightarrow\cI_{A\vert F}\longrightarrow0,
$$
hence $\cN_{A\vert F}\cong\cO_{\p1}\oplus\cO_{\p1}(1)$. Since $A$ is contained in the pull--back of a divisor on ${\bF_1}$ which does not intersect the exceptional divisor it follows that $A\cap E=\emptyset$, hence $A\not\subseteq E$.

We denote by $\Gamma_A$ the locus of curves as above inside the Hilbert scheme of curves on $F$ with Hilbert polynomial $3t+1$. The locus $\Gamma_A$ is irreducible because it is dominated by $\vert\ell\vert\times\vert\xi\vert$.
\end{example}

\begin{example}
\label{eB}
Arguing as in the previous remark we deduce that each pair of elements in $\vert \xi\vert$ and $\vert \ell-e\vert$ intersects along a curve $B$ which is isomorphic to $\p1$: notice that $Bh=2$, hence the Hilbert polynomial is $\chi(\cO_{B}(t))=2t+1$. The class of $B$ in $A^2(F)$ is  $\ell\xi-e\xi$
and $\cI_{B\vert F}$ fits into the Koszul--type resolution
$$
0\longrightarrow\cO_{F}(-\ell-\xi+e)\longrightarrow\cO_{F}(-\ell+e)\oplus\cO_{F}(-\xi)\longrightarrow\cI_{B\vert F}\longrightarrow0,
$$
hence $\cN_{B\vert F}\cong\cO_{\p1}^{\oplus2}$. We have $B\not\subseteq E$, because the cohomology of the above exact sequence tensored by $\cO_F(e)$ and equalities \eqref{Kuenneth} return $h^0\big(F,\cI_{B\vert F}(e)\big)=0$.

We denote by $\Gamma_{B}$ the locus of curves as above inside the Hilbert scheme of curves on $F$ with Hilbert polynomial $2t+1$. The locus $\Gamma_B$ is irreducible because it is dominated by $\vert\ell-e\vert\times\vert\xi\vert$.
\end{example}

\begin{example}
\label{eC}
Each curve $C$ whose class in $A^2(F)$ is $\ell^2$ is isomorphic to $\p1$, because it is a fibre of the projection $p$. We have that $Ch=2$, $\chi(\cO_C(t))=2t+1$ and the ideal sheaf $\cI_{C\vert F}$ fits into the Koszul--type resolution
$$
0\longrightarrow\cO_{F}(-2\ell)\longrightarrow\cO_{F}(-\ell)^{\oplus2}\longrightarrow\cI_{C\vert F}\longrightarrow0,
$$
hence $\cN_{C\vert F}\cong\cO_{\p1}^{\oplus2}$. Notice that $C$ is the inverse image of a point on ${\bF_1}$: hence $C\cap E=\emptyset$ for each general $C$, hence $C\not\subseteq E$.

Let $\Gamma_C$ be the locus of curves as above inside the Hilbert scheme of curves on $F$ with Hilbert polynomial $2t+1$. The locus $\Gamma_C$ is irreducible because it is dominated by $\vert\ell\vert^{\times2}$.
\end{example}

\section{Monads for instanton bundles on $F$}
\label{sMonad}
In this section, for each instanton bundle $\cE$ on $F$, we construct a monad whose cohomology is exactly $\cE$.

We refer the reader to the paper \cite{G--K} and the references therein for the results that we list below in a very concise way. Let $D^b (X)$ be the bounded derived category of coherent sheaves on a smooth variety $X$. For each sheaf $\cA$ and integer $k$ we denote by $\cA[k]$ the trivial complex defined as
$$
\cA[k]^i=\left\lbrace\begin{array}{ll} 
0\quad&\text{if $i\ne k$,}\\
\cA\quad&\text{if $i=k$.}
\end{array}\right.
$$
with the trivial differentials: we will often omit $[0]$ in the notation. A coherent sheaf $\cA$ is {\sl exceptional} as trivial complex in $D^b(X)$ if
$$
\Ext_X^k\big(\cA, \cA\big) = \left\lbrace\begin{array}{ll} 
0\quad&\text{if $k\ne 0$,}\\
\bC\quad&\text{if $k=0$.}
\end{array}\right.
$$
 An ordered set of exceptional objects $(\cA_0,\dots, \cA_s)$ is an {\sl exceptional collection} if $\Ext_{X}^k\big(\cA_i, \cA_j\big) = 0$ when $i > j$ and $k\in\bZ$. An exceptional collection is {\sl full} if it generates $D^b(X)$. An exceptional collection is {\sl strong} if $
\Ext_{X}^k\big(\cA_i, \cA_j\big) = 0$ when $i < j$ and $k\in\bZ\setminus\{\ 0\ \}$.

If $(\cA_0[k_0],\dots,\cA_s[k_s])$ is full and exceptional, then there exists a unique collection $(\cB_0,\dots,\cB_s )$ satisfying 
$$
\Ext_{X}^{k-k_i}\big(\cA_i,\cB_j\big)=\left\lbrace\begin{array}{ll} 
\bC\quad&\text{if $i+j= s$ and $i=k$,}\\
0\quad&\text{otherwise,}
\end{array}\right.
$$
which is called the {\sl right dual collection} of $(\cA_0[k_0],\dots,\cA_s[k_s])$ (see \cite[Proposition 2.6.1]{G--K}). 
\begin{lemma}
The collection
\begin{align*}
(\cF_0[v_0]&,\dots,\cF_7[v_7]):=(\cO_F[v_0],\cO_F(\ell-e)[v_1],\cO_F(e)[v_2],\\
&\cO_F(\ell)[v_3],\cO_F(\xi)[v_4],\cO_F(\ell-e+\xi)[v_5],\cO_F(e+\xi)[v_6],\cO_F(\ell+\xi)[v_7])
\end{align*}
where $v_0=v_1=0$, $v_2=v_3=1$, $v_4=v_5=3$, $v_6=v_7=4$, is exceptional and full. Its right dual collection is 
\begin{align*}
(\cG_0&,\dots,\cG_7):=(\cO_F(-2\ell+e-\xi),\cO_F(-\ell-\xi),\\
&\cO_F(-\ell+e-\xi),\cO_F(-\xi),\cO_F(-2\ell+e),\cO_F(-\ell),\cO_F(-\ell+e),\cO_F)
\end{align*}
and it is strong.
\end{lemma}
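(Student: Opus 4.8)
The plan is to exploit the product structure $F=\p1\times\bF_1$ and to recognise the collection as the external product of a full strong exceptional collection on each factor. Indeed $\cF_0,\dots,\cF_3$ are the pull--backs to $F$ of
\[
\cU_0:=\cO_{\bF_1},\quad \cU_1:=\cO_{\bF_1}(\ell-e),\quad \cU_2:=\cO_{\bF_1}(e),\quad \cU_3:=\cO_{\bF_1}(\ell),
\]
while $\cF_4,\dots,\cF_7$ are the same bundles twisted by $\cO_F(\xi)=\pi^*\cO_{\p1}(1)$; thus the collection is $\{p^*\cU_a\otimes\pi^*\cW_b\}$ with $(\cW_0,\cW_1):=(\cO_{\p1},\cO_{\p1}(1))$, ordered with the $\p1$--index $b$ major. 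First I would observe that the shifts are irrelevant for the exceptional and full properties: from $\Ext^n_F(\cF_i[v_i],\cF_j[v_j])\cong\Ext^{\,n+v_j-v_i}_F(\cF_i,\cF_j)$ one sees that the vanishing of these groups for all $n$ is equivalent to the vanishing of $\Ext^\bullet_F(\cF_i,\cF_j)$, independently of the $v_i$; the shifts enter only when pinning down the dual.

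For fullness I would treat the two factors and then multiply. On $\p1$ the pair $(\cO_{\p1},\cO_{\p1}(1))$ is full by Beilinson, and on $\bF_1\cong\bP(\cO_{\p1}\oplus\cO_{\p1}(-1))$ Orlov's projective--bundle formula, with $\cO_{\bF_1}(e)$ as relative hyperplane class and $\ell-e$ as fibre class, gives precisely $D^b(\bF_1)=\langle\cU_0,\cU_1,\cU_2,\cU_3\rangle$; the Künneth generation principle for derived categories of products then shows that $\{p^*\cU_a\otimes\pi^*\cW_b\}$ generates $D^b(F)$. Exceptionality I would check directly: since each $\cF_i^\vee\otimes\cF_j$ is a line bundle, the Künneth formula \eqref{Kuenneth} splits $\Ext^\bullet_F(\cF_i,\cF_j)=H^\bullet\big(F,\cF_i^\vee\otimes\cF_j\big)$ into the product of the corresponding cohomologies on $\bF_1$ and on $\p1$, so the required vanishings for $i>j$ reduce to the exceptionality of the two factor collections, itself a short computation from \eqref{dimh^0}, Riemann--Roch \eqref{RRsurface} and Serre duality on $\bF_1$.

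Next I would identify the right dual. By its uniqueness it suffices to verify that the displayed $\cG_j$ satisfy the defining relations: $\Ext^m_F(\cF_i,\cG_j)=H^m\big(F,\cF_i^\vee\otimes\cG_j\big)$ vanishes for all $m$ whenever $i+j\ne7$, and equals $\bC$ concentrated in the single degree $m=i-v_i$ whenever $i+j=7$. Since each $\cF_i^\vee\otimes\cG_j$ is a single line bundle $\cO_F(a\ell-be+c\xi)$, \eqref{Kuenneth} reduces every such group to a product of cohomologies on $\bF_1$ and on $\p1$, computed by \eqref{dimh^0} and \eqref{RRsurface}. For $i+j\ne7$ one finds that either the $\xi$--degree equals $-1$, so the $\p1$--factor is acyclic and annihilates everything, or the $\bF_1$--factor is itself acyclic; for $i+j=7$ the bundle $\cF_i^\vee\otimes\cG_{7-i}$ has one--dimensional cohomology placed in the single degree $i-v_i$ (for instance $\cF_0^\vee\otimes\cG_7=\cO_F$ in degree $0$ and $\cF_7^\vee\otimes\cG_0=\omega_F$ in degree $3$ by \eqref{Serre}). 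These relations express that the external product of the right dual collections of the two factors is the right dual of the external product on $F$; the shifts $(v_0,\dots,v_7)=(0,0,1,1,3,3,4,4)$ are chosen precisely so that $i-v_i\in\{0,1,2,3\}$, so that this dual is realised by sheaves sitting in degree $0$. This shift bookkeeping is the one genuinely delicate step.

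Finally, for strongness I would again argue multiplicatively. The bundles $\cG_4,\dots,\cG_7$ are the pull--backs of the right dual $(\cV_0,\dots,\cV_3)=(\cO_{\bF_1}(-2\ell+e),\cO_{\bF_1}(-\ell),\cO_{\bF_1}(-\ell+e),\cO_{\bF_1})$ of $(\cU_0,\dots,\cU_3)$ on $\bF_1$, and $\cG_0,\dots,\cG_3$ are these twisted by $\cO_F(-\xi)$; hence $(\cG_j)$ is the external product of $(\cV_i)$ with the dual pair $(\cO_{\p1}(-1),\cO_{\p1})$ of $(\cW_0,\cW_1)$. I would verify that $(\cV_0,\dots,\cV_3)$ is strong exceptional on $\bF_1$ and that $(\cO_{\p1}(-1),\cO_{\p1})$ is strong exceptional on $\p1$ --- the same kind of finite line--bundle cohomology computations --- and then deduce strongness of $(\cG_j)$ on $F$ by Künneth. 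The main obstacle is organisational rather than conceptual: fullness is the only non--computational input (Orlov's formula together with Künneth generation), and all remaining assertions are a bounded but lengthy family of cohomology calculations on $\bF_1$ governed by \eqref{dimh^0}, with the shift bookkeeping of the previous paragraph the point most likely to hide a degree error.
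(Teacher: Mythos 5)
Your proposal is correct and follows essentially the same route as the paper: the paper obtains fullness and exceptionality by applying Orlov's projective--bundle formula twice (to $\pi_0\colon\bF_1\to\p1$ and then to $p\colon F\cong\bP(\cO_{\bF_1}^{\oplus2})\to\bF_1$, the latter being exactly your K\"unneth external--product step with $(\cO_{\p1},\cO_{\p1}(1))$), and then verifies the right dual and its strongness by direct line--bundle cohomology computations via \eqref{Kuenneth}, just as you do. Your write--up is a more detailed expansion of the same argument, with the shift bookkeeping $m=i-v_i$ correctly identified as the only delicate point.
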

\begin{proof}
The projection $p\colon F\to {\bF_1}$ induces an isomorphism $F\cong\bP(\cO_{\bF_1}^{\oplus2})\to {\bF_1}$ identifying $\cO_{\bP(\cO_{\bF_1}^{\oplus2})}(1)\cong\cO_F(\xi)$. Moreover, ${\bF_1}\cong\bP(\cO_{\p1}\oplus\cO_{\p1}(-1))$. 

Thus the assertion on $(\cF_0[v_0],\dots,\cF_7[v_7])$ follows by applying \cite[Corollary 2.7]{Orl} twice, first to $\pi_0$ and then to $p$. We have $\Ext_{X}^{k}\big(\cF_i,\cG_j\big)\cong H^k\big(F,\cF_i^\vee\otimes\cG_j\big)$, because  $\cG_i$ and $\cF_j$ are locally free sheaves. Thus a direct computation of the latter space via equalities \eqref{Kuenneth} yields all the assertions on $(\cG_0,\dots,\cG_7)$.
\end{proof}

We are ready to prove the main result of this section.

\begin{lemma}
\label{lAO}
Let $\cE$ be an instanton bundle on $F$. Then $\cE$  is the cohomology in degree $0$ of a complex ${\cC}^\bullet$ with $i^{th}$--module
$$
{\cC}^i:=\bigoplus_{p+q=i}\Ext_F^{q-v_p}\big(\cF_{-p},\cE\big)\otimes \cG_{7+p}
$$
\end{lemma}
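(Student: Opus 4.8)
The plan is to derive the statement from the general Beilinson--type resolution attached to a full exceptional collection and its right dual, in the form developed in \cite{G--K}. The only structural facts I would use are those recorded just before the lemma: that $(\cF_0[v_0],\dots,\cF_7[v_7])$ is full and exceptional and that $(\cG_0,\dots,\cG_7)$ is its right dual collection, characterised by $\Ext_F^{k-v_i}\big(\cF_i,\cG_j\big)=\bC$ when $i+j=7$ and $i=k$, and $0$ otherwise. The instanton hypothesis itself plays essentially no role in this lemma; the argument would apply verbatim to any coherent sheaf on $F$ placed in cohomological degree $0$, and the vanishings coming from the definition of an instanton bundle will only be used afterwards, to compute the $\Ext$ spaces via \eqref{Kuenneth} and \eqref{Serre1} and thereby collapse $\cC^\bullet$ to the three--term monad of Theorem \ref{tMonad}.

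First I would invoke the resolution of the diagonal $\Delta\subseteq F\times F$ associated to the collection and its dual: fullness guarantees that $\cO_\Delta$ is the convolution of a bounded complex on $F\times F$ whose terms are direct sums of the external products $\cF_{-p}^\vee\boxtimes\cG_{7+p}$, where the pairing of indices $(-p)+(7+p)=7$ is exactly the one singled out by the right--dual property, and where the cohomological position is governed by the shifts $v_p$. Transporting this through the Fourier--Mukai functor with kernel $\cO_\Delta$, which is the identity on $D^b(F)$, and using the projection formula together with
$$
Rp_{2*}\big(p_1^*\cE\otimes(\cF_{-p}^\vee\boxtimes\cG_{7+p})\big)\cong R\Hom_F\big(\cF_{-p},\cE\big)\otimes\cG_{7+p}
$$
and $R\Hom_F(\cF_{-p},\cE)=\bigoplus_q\Ext^q_F(\cF_{-p},\cE)[-q]$, identifies the $(p,q)$--graded piece of the resulting object as $\Ext_F^{q-v_p}\big(\cF_{-p},\cE\big)\otimes\cG_{7+p}$. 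Collecting the antidiagonals $p+q=i$ then yields exactly the modules $\cC^i$ of the statement, each of which is a genuine locally free sheaf, being a finite direct sum of copies of the line bundles $\cG_{7+p}$ with multiplicities equal to the dimensions of the relevant $\Ext$ spaces.

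It then remains to assemble these graded pieces into the complex $\cC^\bullet$ and to identify its convolution with $\cE$. Because the resolution of the diagonal is a genuine resolution of $\cO_\Delta$, the convolution of $\cC^\bullet$ is $Rp_{2*}\big(p_1^*\cE\otimes\cO_\Delta\big)\cong\cE$; since $\cE$ is concentrated in cohomological degree $0$, this is precisely the assertion that $\cE\cong H^0(\cC^\bullet)$ while $H^i(\cC^\bullet)=0$ for $i\neq0$. The step I expect to demand the most care is the construction of the differentials of $\cC^\bullet$ together with the proof of this quasi--isomorphism: the differentials are not merely the maps induced by the diagonal resolution but may also mix the cohomological degrees $q$ (the higher Beilinson differentials), and controlling them is exactly the content of the dual--collection machinery of \cite{G--K}. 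Once the bookkeeping of indices and of the shifts $v_p$ is matched against that reference, the lemma follows.
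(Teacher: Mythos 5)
Your argument is essentially the paper's: the authors dispose of this lemma by citing \cite[Section 2.7.3]{G--K} directly, observing that the statement is exactly the assertion that $\cE[0]$ is quasi--isomorphic to $\cC^\bullet$, and what you have written is an unwinding of the standard proof of that cited result (diagonal resolution attached to the full exceptional collection and its right dual, Fourier--Mukai with kernel $\cO_\Delta$, projection formula, convolution). You correctly flag that the instanton hypothesis is irrelevant here and that the delicate point is the Postnikov/convolution bookkeeping, which is precisely what the reference to \cite{G--K} is carrying in the paper.
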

\begin{proof}
The statement is equivalent to the assertion that the complex $\cE[0]$ is quasi--isomorphic to the complex ${\cC}^\bullet$. But such an assertion is \cite[Section 2.7.3]{G--K}.
\end{proof}

Thus, in order to prove Theorem \ref{tMonad}, it suffices to compute the dimensions $e^{p,q}:=h^{q-v_p}\big(F,\cE\otimes\cF_{-p}^\vee\big)$ of the vector spaces $E^{p,q}_1=\Ext_F^{q-v_p}\big(\cF_{-p},\cE\big)$, where $-7\le p\le0$ and $0\le q\le 7$.

To this purpose we will repeatedly use Lemma \ref{lHoppe} in the following form: the rank $2$ bundle $\cE$ on $F$ is $\mu$--semistable if and only if $h^0\big(F,\cE(-a\ell+be-c\xi)\big)=0$ whenever
\begin{equation}
\label{Hoppe}
3a+2c+6\ge b.
\end{equation}

\begin{proposition}
\label{pTable}
Let $\cE$ be an instanton bundle on $F$.

If $c_2(\cE)=\alpha\ell\xi-\beta e\xi +\gamma\ell^2$ and 
$$
\delta:= h^1\big(F,\cE(-e)\big),\qquad \epsilon:=h^1\big(F,\cE(-e-\xi)\big),
$$
then $e^{p,q}$ is the number in position $(p,q)$ in the following table.
\begin{table}[H]
\begin{tiny}
\centering
\bgroup
\def\arraystretch{1.5}
\begin{tabular}{ccccccccc}
\cline{1-8}
\multicolumn{1}{|c|}{0} & \multicolumn{1}{c|}{0} & \multicolumn{1}{c|}{0} & \multicolumn{1}{c|}{0} & \multicolumn{1}{c|}{0} & \multicolumn{1}{c|}{0} & \multicolumn{1}{c|}{0} & \multicolumn{1}{c|}{0} & $q=7$ \\ 
\cline{1-8}
\multicolumn{1}{|c|}{$\alpha+\gamma-6$} & \multicolumn{1}{c|}{$\beta+\gamma+\epsilon-2$} & \multicolumn{1}{c|}{0} & \multicolumn{1}{c|}{0} & \multicolumn{1}{c|}{0} & \multicolumn{1}{c|}{0} & \multicolumn{1}{c|}{0} & \multicolumn{1}{c|}{0} & $q=6$ \\ 
\cline{1-8}
\multicolumn{1}{|c|}{0} & \multicolumn{1}{c|}{$\epsilon$} & \multicolumn{1}{c|}{$\alpha-\beta+\gamma-4$} & \multicolumn{1}{c|}{$\gamma-2$} & \multicolumn{1}{c|}{0} & \multicolumn{1}{c|}{0} & \multicolumn{1}{c|}{0} & \multicolumn{1}{c|}{0} & $q=5$ \\ 
\cline{1-8}
\multicolumn{1}{|c|}{0} & \multicolumn{1}{c|}{0} &\multicolumn{1}{c|}{0} & \multicolumn{1}{c|}{0} & \multicolumn{1}{c|}{0} & \multicolumn{1}{c|}{0} & \multicolumn{1}{c|}{0} & \multicolumn{1}{c|}{0} & $q=4$ \\ 
\cline{1-8}
\multicolumn{1}{|c|}{0} & \multicolumn{1}{c|}{0} &\multicolumn{1}{c|}{0} & \multicolumn{1}{c|}{0} & \multicolumn{1}{c|}{$\alpha-3$} & \multicolumn{1}{c|}{$\beta+\delta-1$} & \multicolumn{1}{c|}{0} & \multicolumn{1}{c|}{0} & $q=3$ \\ 
\cline{1-8}
\multicolumn{1}{|c|}{0} & \multicolumn{1}{c|}{0} &\multicolumn{1}{c|}{0} & \multicolumn{1}{c|}{0} & \multicolumn{1}{c|}{0} & \multicolumn{1}{c|}{$\delta$} & \multicolumn{1}{c|}{$\alpha-\beta-2$} & \multicolumn{1}{c|}{0} & $q=2$ \\ 
\cline{1-8}
\multicolumn{1}{|c|}{0} & \multicolumn{1}{c|}{0} & \multicolumn{1}{c|}{0} & \multicolumn{1}{c|}{0} & \multicolumn{1}{c|}{0} & \multicolumn{1}{c|}{0} & \multicolumn{1}{c|}{0} & \multicolumn{1}{c|}{0}& $q=1$ \\ 
\cline{1-8}
\multicolumn{1}{|c|}{0} & \multicolumn{1}{c|}{0} & \multicolumn{1}{c|}{0} &\multicolumn{1}{|c|}{0} & \multicolumn{1}{c|}{0} & \multicolumn{1}{c|}{0} & \multicolumn{1}{c|}{0} & \multicolumn{1}{c|}{0} & $q=0$ \\ 
\cline{1-8}
$p=-7$ & $p=-6$ & $p=-5$ & $p=-4$ & $p=-3$ & $p=-2$ & $p=-1$ & $p=0$
\end{tabular}
\egroup
\caption{The values of $e^{p,q}$}
\end{tiny}
\end{table}
\end{proposition}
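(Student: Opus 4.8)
The plan is to read each entry $e^{p,q}$ as a single cohomology group of $\cE$ twisted by one of the eight line bundles $\cF_{-p}^\vee$, and to clear the easy rows first. The column $\cF_0^\vee=\cO_F$ is zero in every degree by Lemma \ref{lNatural}, since $\cE$ is an instanton bundle. Every $h^0$--entry vanishes by $\mu$--semistability: for each of the eight twists $\cF_{-p}^\vee$ one checks directly that the numerical inequality \eqref{Hoppe} holds, so Lemma \ref{lHoppe} gives $h^0=0$. Dually, every $h^3$--entry vanishes, because by \eqref{Serre1} and $\cE^\vee\cong\cE(h)$ one has $h^3\big(F,\cE\otimes\cF_{-p}^\vee\big)=h^0\big(F,\cE\otimes\cF_{-p}\big)$, and \eqref{Hoppe} checked now on $\cF_{-p}$ forces these to vanish as well. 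Finally the entries whose cohomological index falls outside $[0,3]$ are zero for trivial reasons. This disposes of all the $h^0$-- and $h^3$--entries together with the out--of--range zeros, and leaves only the $h^1$-- and $h^2$--entries.

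For the surviving entries I would proceed by restriction to a surface together with Riemann--Roch. If $D$ is the relevant class and $\Sigma\in\vert D\vert$, the sequence
$$
0\longrightarrow\cE(-D)\longrightarrow\cE\longrightarrow\cE\vert_\Sigma\longrightarrow0
$$
(or its twist by $\cO_F(-\xi)$), together with $h^i\big(F,\cE\big)=0$, identifies $h^1\big(F,\cE(-D)\big)$ with $h^0\big(\Sigma,\cE\vert_\Sigma\big)$ and $h^2\big(F,\cE(-D)\big)$ with $h^1\big(\Sigma,\cE\vert_\Sigma\big)$. Here each $\Sigma$ is a $\p1\times\p1$ or a copy of $\bF_1$; its Chern data $c_1\big(\cE\vert_\Sigma\big)$ and $c_2\big(\cE\vert_\Sigma\big)$ are obtained by intersecting $c_1(\cE)$ and $c_2(\cE)$ with $[\Sigma]$, and \eqref{RRsurface} then produces $\chi\big(\cE\vert_\Sigma\big)$. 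For instance restriction to $E$ gives $c_1\big(\cE\vert_E\big)=-2\sigma_1-\sigma_2$, $c_2\big(\cE\vert_E\big)=\beta$ and $\chi\big(\cE\vert_E\big)=1-\beta$. Thus every $h^2$--entry is forced once the corresponding $h^1$ is known, and the values $\beta+\delta-1$, $\gamma-2$, $\alpha-3$, $\alpha-\beta-2$, $\alpha-\beta+\gamma-4$, $\alpha+\gamma-6$, $\beta+\gamma+\epsilon-2$ are exactly what these Euler characteristics yield.

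The decisive point is therefore the vanishing of the ``moving'' $h^1$--entries, and here I would separate the rigid class $e$ from the others. Since $\vert e\vert=\{E\}$ is a single divisor, $\cE\vert_E$ need not be semistable; accordingly $h^1\big(F,\cE(-e)\big)=h^0\big(E,\cE\vert_E\big)$ and $h^1\big(F,\cE(-e-\xi)\big)$ are not claimed to vanish but are named $\delta$ and $\epsilon$, and the corresponding $h^2$'s follow from the Euler characteristics above. For the base--point--free classes $\xi$ and $\ell-e$ I would instead prove $h^0\big(\Sigma,\cE\vert_\Sigma\big)=0$ by a push--forward argument that avoids any restriction theorem: if $\rho\colon F\to\p1$ is $\pi$ (fibre class $\xi$) or $\pi_0\circ p$ (fibre class $\ell-e$), then $h^0\big(\cE\vert_{\rho^{-1}(t)}\big)$ is constant in $t$, so $\rho_*\cE$ is locally free, and $h^0\big(F,\cE\big)=h^1\big(F,\cE\big)=0$ force $\rho_*\cE\cong\cO_{\p1}(-1)^{\oplus k}$; were $k>0$, the counit $\rho^*\rho_*\cE\to\cE$ would give a nonzero section of $\cE(D)$, contradicting \eqref{Hoppe}. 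Hence $h^1\big(F,\cE(-\xi)\big)=h^1\big(F,\cE(-\ell+e)\big)=0$; applying the analogous $(\ell-e)$--trick to $\cE(-\xi)$, which now has vanishing $h^0$ and $h^1$, disposes of $\cE(-\ell+e-\xi)$.

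The remaining and, I expect, hardest case is $h^1\big(F,\cE(-\ell)\big)=0$, equivalently $h^0\big(P,\cE\vert_P\big)=0$ for a general $P\in\vert\ell\vert\cong\p1\times\p1$. Because $\ell$ is only nef and is not the fibre class of any morphism from $F$, the push--forward trick is unavailable, and the recursion via restriction to $E$ or to a member of $\vert\xi\vert$ merely reproduces the same problem one dimension down on $\bF_1$. The natural way out is to show that the restriction of $\cE$ to a general member of the base--point--free system $\vert\ell\vert$ is $\mu$--semistable; since $\mu\big(\cE\vert_P\big)<0$ while $\mu(\cO_P)=0$, semistability then immediately gives $h^0\big(P,\cE\vert_P\big)=0$. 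Establishing this restriction statement is the main obstacle I foresee. Once it is granted, $h^1\big(F,\cE(-\ell-\xi)\big)=0$ follows from the $\xi$--push--forward trick applied to $\cE(-\ell)$, and all entries of the table are then determined.
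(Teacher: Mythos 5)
Your overall skeleton matches the paper's: the out--of--range entries are trivially zero, the column $p=0$ dies by Lemma \ref{lNatural}, the $h^0$--entries die by Lemma \ref{lHoppe} via inequality \eqref{Hoppe}, the $h^3$--entries die by Serre duality \eqref{Serre1} plus Hoppe, and the nonzero entries are forced by Riemann--Roch once the relevant $h^1$'s are controlled. Your push--forward argument for the fibre classes $\xi$ and $\ell-e$ is a legitimate (if slightly different) substitute for the paper's Euler-- and Koszul--sequence arguments; note only that the claim that $h^0$ is constant on fibres is both unjustified and unnecessary, since any coherent sheaf on $\p1$ with $h^0=h^1=0$ is automatically $\cO_{\p1}(-1)^{\oplus k}$.

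However, there is a genuine gap exactly where you flag one: you do not prove $h^1\big(F,\cE(-\ell)\big)=0$ (the entry $e^{-3,2}$), and the entry $e^{-7,5}=h^1\big(F,\cE(-\ell-\xi)\big)$ depends on it, so two entries of the table (and hence the Euler--characteristic values $\alpha-3$ and $\alpha+\gamma-6$ in the same columns) are left unestablished. Your proposed escape via $\mu$--semistability of $\cE\vert_P$ for a general $P\in\vert\ell\vert$ is not carried out and is not routine: $\ell$ is not ample, so no standard restriction theorem applies directly. The paper's solution is elementary and is the idea you are missing: since $\ell e=0$ on $\bF_1$, a general member of $\vert\ell\vert$ and the exceptional divisor $E$ are disjoint, so the corresponding sections of $\cO_F(\ell)\oplus\cO_F(e)$ have empty common zero locus and yield the Koszul sequence
$$
0\longrightarrow \cE(-\ell)\longrightarrow \cE\oplus\cE(e-\ell)\longrightarrow \cE(e)\longrightarrow 0 .
$$
Its cohomology gives $h^1\big(F,\cE(-\ell)\big)=0$ from $h^0\big(F,\cE(e)\big)=0$ (semistability) together with $h^1\big(F,\cE\big)=0$ and $h^1\big(F,\cE(e-\ell)\big)=0$, the latter being the entry $e^{-1,1}$ you have already handled. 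The vanishing $h^1\big(F,\cE(-\ell-\xi)\big)=0$ then follows from the Euler--type sequence $0\to\cE(-\ell-\xi)\to\cE(-\ell)^{\oplus2}\to\cE(-\ell+\xi)\to0$ (equivalently, your $\xi$--push--forward applied to $\cE(-\ell)$). With this insertion your argument closes.
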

\begin{proof}
It is clear that $e^{p,q}=0$ if  $p=0,-1$ and $q\ge 4$, or $p=-2,-3$ and $q=0,5,6,7$, or $p=-4,-5$ and $q=0,1,2,7$, or $p=-6,-7$ and $q\le3$. 
Lemma \ref{lNatural} yields $e^{0,q}=0$ also for $q\le3$. Thus $e^{p,q}=0$ if  $p=-1$ and $q=0$, or $p=-2,-3$ and $q=1$, or $p=-4,-5$ and $q=3$, or $p=-6,-7$ and $q=4$, thanks to Lemma \ref{lHoppe} and inequality \eqref{Hoppe}. 

Equality \eqref{Serre1}, Lemma \ref{lHoppe} and inequality \eqref{Hoppe} imply that $e^{p,q}=0$ if  $p=-1$ and $q=3$, or $p=-2,-3$ and $q=4$, or $p=-4,-5$ and $q=6$, or $p=-6,-7$ and $q=7$. 

We have $(\ell-e)^2=\ell e=0$ on ${\bF_1}$. Thus, the Koszul complexes associated to general pairs  in $H^0\big({\bF_1},\cO_{\bF_1}(\ell-e)\big)^{\oplus2}$ and $H^0\big({\bF_1},\cO_{\bF_1}(\ell)\big)\oplus H^0\big({\bF_1},\cO_{\bF_1}(e)\big)$ induce the exact sequences
\begin{gather*}
0\longrightarrow \cE(e-\ell)\longrightarrow \cE^{\oplus2}\longrightarrow \cE(\ell-e)\longrightarrow 0,\\
0\longrightarrow \cE(-\ell)\longrightarrow \cE\oplus\cE(e-\ell)\longrightarrow \cE(e)\longrightarrow 0.
\end{gather*}
Equality \eqref{Serre1} implies $h^0\big(F,\cE(\ell-e)\big)=e^{-1,3}$. As pointed out above the number on the right is zero, hence the cohomology of the first sequence above yields $e^{-1,1}=0$, also thanks to Lemma \ref{lNatural}. Thus the cohomology of the second sequence above also implies $e^{-3,2}=0$, thanks to $e^{-2,4}=0$ and equality \eqref{Serre1}.

Equality \eqref{Serre1} implies $h^0\big(F,\cE(\xi)\big)=e^{-4,6}$, which we know to be zero. Thus the cohomology of sequence 
\begin{equation}
\label{seqEuler}
0\longrightarrow\cO_{F}(-\xi)\longrightarrow\cO_{F}^{\oplus2}\longrightarrow\cO_{F}(\xi)\longrightarrow0
\end{equation}
 tensored by $\cE$, Lemma \ref{lNatural} and equality \eqref{Serre1} return $e^{-4,4}=h^1\big(F,\cE(-\xi)\big)=0$.

We have $h^0\big(F,\cE(-\ell+e+\xi)\big)=h^0\big(F,\cE(-\ell+\xi)\big)=0$ again by Lemma \ref{lHoppe} and inequality \eqref{Hoppe}: the cohomology of sequence \eqref{seqEuler} tensored by $\cE(e-\ell)$ and $\cE(-\ell)$ then yields respectively $e^{-5,4}=h^0\big(F,\cE(-\ell+e+\xi)\big)=0$ and $e^{-7,5}=h^0\big(F,\cE(-\ell+\xi)\big)=0$. 

The remaining values of $e^{p,q}$ can be computed using equality \eqref{RRgeneral} and the above vanishings.
\end{proof}

\begin{remark}
\label{rInequalities}
As an immediate consequence of the non--negativity of the $e^{p,q}$'s in Table 1 we deduce inequalities \eqref{BoundInstanton1} and \eqref{BoundInstanton2}. 
\end{remark}

Moreover, we also have the following result.

\begin{corollary}
\label{cBound}
Let $\cE$ be an instanton on $F$ with $c_2(\cE)=\alpha\ell\xi-\beta e\xi+\gamma\ell^2$, then $2\delta\ge\epsilon$.
\end{corollary}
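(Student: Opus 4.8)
The plan is to extract the inequality $2\delta \ge \epsilon$ from the monad structure already established in the preceding results, rather than by a fresh cohomology computation. The quantities $\delta = h^1(F,\cE(-e))$ and $\epsilon = h^1(F,\cE(-e-\xi))$ appear as entries in Table 1, so the most natural route is to relate $\cE(-e)$ and $\cE(-e-\xi)$ by a short exact sequence and then chase cohomology. Recall from Example \ref{eLine} that $\vert e\vert$ contains a single divisor $E\cong\p1\times\p1$, and that $\cO_F(\xi)\cong\varphi^*\cO_Q(\sigma_1)$ restricts to a globally generated line bundle. The idea is to use the standard restriction sequence associated to a general section of $\cO_F(\xi)$, whose zero locus is a divisor $S\in\vert\xi\vert$.

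First I would write down the exact sequence obtained by tensoring
$$
0\longrightarrow \cO_F(-\xi)\longrightarrow \cO_F\longrightarrow \cO_S\longrightarrow 0
$$
by $\cE(-e)$, giving
$$
0\longrightarrow \cE(-e-\xi)\longrightarrow \cE(-e)\longrightarrow \cE(-e)\otimes\cO_S\longrightarrow 0.
$$
The associated long exact sequence in cohomology reads in part
$$
H^0\big(S,\cE(-e)\otimes\cO_S\big)\longrightarrow H^1\big(F,\cE(-e-\xi)\big)\longrightarrow H^1\big(F,\cE(-e)\big),
$$
so that $\epsilon = h^1(F,\cE(-e-\xi))$ is controlled by $\delta = h^1(F,\cE(-e))$ together with the $h^0$ term coming from the restriction to $S$. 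To turn this into the precise bound $2\delta\ge\epsilon$, I expect one must iterate: a second general section of $\cO_F(\xi)$ (or equivalently the full two-dimensional space of sections appearing in the Euler-type sequence \eqref{seqEuler}) should be used to bound the $H^0(S,-)$ contribution by another copy of $\delta$. Concretely, I would tensor \eqref{seqEuler} by $\cE(-e)$ and read off
$$
0\longrightarrow \cE(-e-\xi)\longrightarrow \cE(-e)^{\oplus2}\longrightarrow \cE(-e+\xi)\longrightarrow 0,
$$
whose cohomology sequence contains
$$
H^0\big(F,\cE(-e+\xi)\big)\longrightarrow H^1\big(F,\cE(-e-\xi)\big)\longrightarrow H^1\big(F,\cE(-e)\big)^{\oplus2}.
$$
Since $h^0(F,\cE(-e+\xi))=0$ (this follows from $\mu$--semistability via Lemma \ref{lHoppe} and inequality \eqref{Hoppe}, because $\cO_F(e-\xi)$ has the requisite slope), injectivity of the first nontrivial map gives $\epsilon\le 2\delta$ at once.

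The main obstacle I anticipate is checking that the boundary map in the two-section sequence really does receive nothing from an $H^0$ term, i.e. verifying $h^0(F,\cE(-e+\xi))=0$ and correctly identifying the relevant slope inequality. Here the divisor $D = e-\xi$ must be tested against the criterion \eqref{Hoppe}, and I would confirm that the corresponding numerical condition is satisfied so that $\mu$--semistability forces the vanishing. Once that vanishing is in hand, the inequality $2\delta\ge\epsilon$ drops out immediately from the displayed long exact sequence, with no residual Chern-class bookkeeping required; the whole argument is a short diagram chase built on sequence \eqref{seqEuler} and the $\mu$--semistability of $\cE$.
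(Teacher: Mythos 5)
Your argument is correct and is essentially the paper's own proof: both tensor the Euler-type sequence \eqref{seqEuler} by $\cE(-e)$ and deduce $\epsilon\le2\delta$ from the vanishing $h^0\big(F,\cE(-e+\xi)\big)=0$, which follows from $\mu$--semistability via Lemma \ref{lHoppe} and inequality \eqref{Hoppe}. The preliminary detour through restriction to a divisor $S\in\vert\xi\vert$ is unnecessary, but the final diagram chase you settle on is exactly the intended one.
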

\begin{proof}
It suffices to compute the cohomology of sequence \eqref{seqEuler} tensored by $\cE(-e)$, taking into account that $h^0\big(F,\cE(e-\xi)\big)=0$ by Lemma \ref{lHoppe} and inequality \eqref{Hoppe}. 
\end{proof}

We are ready to prove Theorem \ref{tMonad} stated in the introduction. Recall that
\begin{gather*}
\cC^{-1}:=\cO_{F}(-2\ell+e-\xi)^{\oplus\alpha+\gamma-6}\oplus\cO_{F}(-\ell-\xi)^{\oplus\epsilon},\\
\begin{align*}
\cC^0:=\cO_{F}(-\ell-\xi)^{\oplus\beta+\gamma+\epsilon-2}&\oplus\cO_{F}(-\ell+e-\xi)^{\oplus\alpha-\beta+\gamma-4}\oplus\\
&\oplus\cO_F(-2\ell+e)^{\oplus\alpha-3}\oplus\cO_F(-\ell)^{\oplus\delta},
\end{align*}\\
\cC^1:=\cO_{F}(-\xi)^{\oplus\gamma-2}\oplus\cO_{F}(-\ell)^{\oplus\beta+\delta-1}\oplus\cO_{F}(-\ell+e)^{\oplus\alpha-\beta-2}.
\end{gather*}

\medbreak
\noindent{\it Proof of Theorem \ref{tMonad}.}
The first part of the statement can be obtained by combining Lemma \ref{lAO} and Proposition \ref{pTable}.

Conversely, assume that the cohomology $\cE$ of monad \eqref{Monad}  is a $\mu$--semistable vector bundle of rank $2$, so   $h^0\big(F,\cE\big)=0$. It is easy to check that
\begin{gather*}
c_1(\cE)=-3\ell+e-2\xi,\qquad
c_2(\cE)=\alpha\ell\xi-\beta e\xi +\gamma \ell^2.
\end{gather*}
We have to check the other bounds on the cohomology of $\cE$. To this purpose consider the two short exact sequences
\begin{equation}
\label{Display}
\begin{gathered}
0\longrightarrow \cK\longrightarrow \cC^0\longrightarrow\cC^{1}\longrightarrow0,\\
0\longrightarrow \cC^{-1}\longrightarrow \cK\longrightarrow\cE\longrightarrow0.
\end{gathered}
\end{equation}
Equalities \eqref{Kuenneth} and the cohomology of  sequences \eqref{Display} tensored by $\cO_F(-e)$ imply
$$
h^1\big({F},\cE(-e)\big)\le (\alpha-3)h^1\big({\bF_1},\cO_{\bF_1}(-2\ell)\big)+\delta h^1\big({\bF_1},\cO_{\bF_1}(-\ell-e)\big).
$$
Trivially $h^0\big({\bF_1},\cO_{\bF_1}(-2\ell)\big)=h^0\big({\bF_1},\cO_{\bF_1}(-\ell-e)\big)=0$. Moreover
\begin{gather*}
h^2\big({\bF_1},\cO_{\bF_1}(-2\ell)\big)=h^0\big({\bF_1},\cO_{\bF_1}(-\ell+e)\big)=0,\\
h^2\big({\bF_1},\cO_{\bF_1}(-\ell-e)\big)=h^0\big({\bF_1},\cO_{\bF_1}(-2\ell+2e)\big)=0.
\end{gather*}
Thus equality \eqref{RRsurface} yields
\begin{gather*}
-h^1\big({\bF_1},\cO_{\bF_1}(-2\ell)\big)=\chi(\cO_{\bF_1}(-2\ell))=0, \\
 -h^1\big({\bF_1},\cO_{\bF_1}(-\ell-e)\big)=\chi(\cO_{\bF_1}(-\ell-e))=-1,
\end{gather*}
whence we deduce $h^1\big({F},\cE(-e)\big)\le \delta$. Essentially the same argument also yields $h^1\big({F},\cE(-\ell-e)\big)\le \epsilon$.

Now let $D\in\vert a\ell-be+c\xi\vert$ be effective. Thus we obtain $a=D\ell\xi\ge0$, $c=D\ell^2\ge0$ and $a-b=D(\ell-e)\xi\ge0$, as an immediate consequence of Remark \ref{rNEF}. Assume $b<0$. Then $b=De\xi$, hence all the lines with class $e\xi$ are contained in $D$. Since such lines cover the whole exceptional divisor $E$, it follows that $E\subseteq D$. Thus, if $D$ is integral, either $D=E$ or $D\ne E$ and $b\ge0$.

If $a,b,c\ge0$, then again the cohomology of sequences \eqref{Display} implies
\begin{align*}
h^1\big(F,\cE(-D)\big)&\le (\alpha+\gamma-6) h^2\big(F,\cO_F(-(a+2)\ell+(b+1)e-(c+1)\xi)\big)+\\
&+\epsilon h^2\big(F,\cO_F(-(a+1)\ell+be-(c+1)\xi)\big).
\end{align*}
If $c=0$, then equalities \eqref{Kuenneth} automatically imply $h^1\big(F,\cE(-D)\big)=0$. In order to prove that $h^1\big(F,\cE(-D)\big)=0$ when $c\ge1$, it suffices to check that
\begin{equation}
\label{h^1vanishing}
h^1\big({\bF_1},\cO_{\bF_1}(-(a+2)\ell+(b+1)e)\big)=h^1\big({\bF_1},\cO_{\bF_1}(-(a+1)\ell+be)\big)=0
\end{equation}
again thanks to equalities \eqref{Kuenneth}. Notice that
$$
h^0\big({\bF_1},\cO_{\bF_1}(-(a+2)\ell+(b+1)e)\big)=h^0\big({\bF_1},\cO_{\bF_1}(-(a+1)\ell+be)\big)=0,
$$
because $a\ge0$. Equalities \eqref{Serre}, \eqref{dimh^0} and \eqref{RRsurface} yield 
\begin{gather*}
\begin{align*}
h^2\big({\bF_1},\cO_{\bF_1}(-(a+2)\ell+(b+1)e)\big)&=h^0\big({\bF_1},\cO_{\bF_1}((a-1)\ell-be)\big)=\\
&=\chi(\cO_{\bF_1}(-(a+2)\ell+(b+1)e)),
\end{align*}\\
\begin{align*}
h^2\big({\bF_1},\cO_{\bF_1}(-(a+1)\ell+be)\big)&=h^0\big({\bF_1},\cO_{\bF_1}((a-2)\ell-(b-1)e)\big)=\\
&=\chi(\cO_{\bF_1}(-(a+1)\ell+be))
\end{align*}
\end{gather*}
hence equalities \eqref{h^1vanishing} hold true. In particular $h^1\big(F,\cE\big)=0$.
\qed
\medbreak

\begin{remark}
\label{rConfront}
Each monad on $F$ can be also viewed as a family of monads on $\bF_1$ parameterized by $\p1$.  

Let $\cE$ be an instanton bundle $\cE$ with $c_2(\cE)=\alpha\ell\xi-\beta e\xi+\gamma\ell^2$. If $S\cong\bF_1$ is any fibre of $\pi$, then $\cE_S:=\cE\otimes\cO_S$ is a vector bundle satisfying $c_1(\cE_S)=-3\ell+e$ and $c_2(\cE_S)=\gamma$. Since $\cE_S$ is a vector bundle, then the restrictions of the sequences \eqref{Display} to $S$ yields the monad
\begin{align*}
0&\longrightarrow\cO_S(-2\ell+e)^{\oplus\alpha+\gamma-6}\oplus\cO_S(-\ell)^{\oplus\epsilon}\longrightarrow\\
&\longrightarrow\cO_S(-\ell)^{\oplus\beta+\gamma+\delta+\epsilon-2}\oplus\cO_S(-\ell+e)^{\oplus\alpha-\beta+\gamma-4}\oplus\cO_S(-2\ell+e)^{\oplus\alpha-3}\longrightarrow\\
&\longrightarrow\cO_S^{\oplus\gamma-2}\oplus\cO_S(-\ell)^{\oplus\beta+\delta-1}\oplus\cO_S(-\ell+e)^{\oplus\alpha-\beta-2}\longrightarrow0
\end{align*}
whose cohomology is  $\cE_S$. 

Monads on $\bF_1$ have been widely studied: e.g. see \cite{King, Buch1, Buch2}. In particular, if $\sigma_*\cE_S$ is semistable, then \cite[Proposition 1.10]{Buch2} implies that $\cE_S$ is the cohomology of a monad of the form
\begin{equation*}
0\longrightarrow\cK\longrightarrow\cO_S(-\ell)^{\oplus k}\longrightarrow\mathcal L\longrightarrow0
\end{equation*}
where $\cK$ and $\mathcal L$ are direct sums of suitable line bundles on $S\cong\bF_1$ and $k$ depends only on $\gamma$.

The two monads above are trivially different. Moreover, the cohomologies of the restrictions to $S$ of the sequences \eqref{Display} yield $h^0\big(S,\cE_S(\ell)\big)\ge3-\gamma$. The Leray spectral sequence $E_2^{p,q}:=H^p\big(\p2,R^q\sigma_*\cE_S\otimes\cO_{\p2}(1)\big)$ abuts to $H^{p+q}\big(S,\cE_S(\ell)\big)$, hence
$$
h^0\big(\p2,\sigma_*\cE_S\otimes\cO_{\p2}(1)\big)\ge3-\gamma.
$$
Lemma \ref{lHoppe} then implies that $\sigma_*\cE_S$ is not $\mu$--semistable if $\gamma=2$. It follows that $\sigma_*\cE_S$ is not even semistable in this case.
\end{remark}

Let us conclude this section with a remark on earnest instanton bundles. Let us recall this notion (see \cite{C--C--G--M} for more details): an instanton bundle $\cE$ on a Fano threefold is called {\sl earnest} if $h^1\big(X,\cE(-q_Xh-D)\big)=0$ whenever $\vert D\vert\ne\emptyset$ contains smooth integral elements. The following corollary is an immediate consequence of Theorem \ref{tMonad} and Corollary \ref{cBound}.

\begin{corollary}
\label{cExotic}
Let $\cE$ be an instanton on $F$. Then $\cE$ is earnest if and only if $h^1\big({F},\cE(-e)\big)=0$.
\end{corollary}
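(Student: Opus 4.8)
The plan is to reduce the earnest condition to the single divisor class $e$. First I would record that $F$ has index $i_F=1$, so that $q_F=\left[i_F/2\right]=0$; consequently the shift $-q_Fh-D$ appearing in the definition of earnestness is just $-D$, and $\cE$ is earnest precisely when $h^1\big(F,\cE(-D)\big)=0$ for every class $D$ whose linear system $\vert D\vert\ne\emptyset$ contains smooth integral elements. The whole argument then hinges on the fact recalled in Section \ref{sFano}, namely that $\vert e\vert$ consists of a single divisor $E$, which is smooth and integral since $E\cong\p1\times\p1$.

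For the implication earnest $\Rightarrow h^1\big(F,\cE(-e)\big)=0$ I would simply specialize the defining condition to $D=e$: because $E\in\vert e\vert$ is smooth and integral, earnestness immediately forces $h^1\big(F,\cE(-e)\big)=0$. This direction is essentially free.

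For the converse, assume $h^1\big(F,\cE(-e)\big)=0$. Being an instanton, $\cE$ is $\mu$--semistable, so by Theorem \ref{tMonad} it is the cohomology of a monad $\cC^\bullet$ and, applying the converse part of that theorem to this $\mu$--semistable cohomology, property (3) holds for $\cE$: one has $h^1\big(F,\cE(-D)\big)=0$ for every effective integral divisor $D\notin\vert e\vert$. Now take any class $D$ such that $\vert D\vert$ contains a smooth integral element $D'$. If $D'\neq E$, then $D'$ is an effective integral divisor lying outside $\vert e\vert$, so $h^1\big(F,\cE(-D)\big)=h^1\big(F,\cE(-D')\big)=0$ by (3); if instead $D'=E$, then $D=e$ and the hypothesis gives $h^1\big(F,\cE(-D)\big)=0$. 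In either case the vanishing holds, so $\cE$ is earnest.

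The one point that requires care, and the step I would treat as the crux, is the case analysis in the converse: I must argue that, apart from the classes already handled by Theorem \ref{tMonad}(3), the earnest condition involves no class other than $e$. This is exactly what the uniqueness and smoothness of $E$ inside $\vert e\vert$ guarantees, so the obstacle is conceptual rather than computational. As a clean complement I would invoke Corollary \ref{cBound}: from $2\delta\ge\epsilon\ge0$ together with $\delta=h^1\big(F,\cE(-e)\big)=0$ one also obtains $\epsilon=h^1\big(F,\cE(-e-\xi)\big)=0$, which shows that under earnestness the summands of $\cC^\bullet$ indexed by $\epsilon$ disappear as well, giving a fully explicit description.
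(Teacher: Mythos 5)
Your proof is correct and follows the same route the paper intends: the authors simply declare the corollary an immediate consequence of Theorem \ref{tMonad} and Corollary \ref{cBound}, and your argument spells this out exactly — one direction by specializing the earnest condition to the smooth integral divisor $E\in\vert e\vert$, the other by combining Theorem \ref{tMonad}(3) (valid for every instanton, being the cohomology of its own monad) with the fact that $E$ is the unique, smooth, integral member of $\vert e\vert$. Your closing observation that Corollary \ref{cBound} forces $\epsilon=0$ matches the paper's cited use of that corollary.
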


\section{Existence of instanton bundles}
\label{sInstanton}

In this section we show that  instanton bundles on $F$ exist for each admissible value of the second Chern class. As pointed in the introduction it is easy to give examples of decomposable instanton bundles on $F$.

\begin{example}
\label{eDecomposable}
Let $\cA:=\cO_F(-2\ell)$ and $\cB:=\cO_F(-\ell+e-2\xi)$. If $\cE:=\cA\oplus\cB$, then $c_1(\cE)=-h$ and  equalities \eqref{Kuenneth} yield $h^i\big(F,\cE\big)=0$ for $i=0,1$. Moreover, $\mu(\cA)=\mu(\cB)=-24$, hence $\cE$ is also $\mu$--semistable. Thus $\cE$ is a decomposable instanton bundle.
\end{example}

In order to construct indecomposable instanton bundles we need two different constructions, one for the earnest bundles and the other for non--earnest ones.

\begin{construction}
\label{conInstanton}
Let $\alpha,\beta,\gamma\in\bZ$ satisfy inequalities \eqref{BoundInstanton1}. If $\beta\ge1$, thanks to Examples \ref{eA}, \ref{eB} and \ref{eC} we can take pairwise disjoint curves
$$
A_1,\dots, A_{\alpha-2-\beta}\in \Gamma_A,\qquad B_1,\dots,B_{\beta-1}\in\Gamma_{B},\qquad C_1,\dots,C_{\gamma-2}\in\Gamma_C.
$$
If $\beta\le0$, thanks to Examples  \ref{eA}, \ref{eLine} and \ref{eC} we can take pairwise disjoint curves
$$
A_1,\dots, A_{\alpha-3}\in \Gamma_A,\qquad B_1,\dots,B_{1-\beta}\in\Lambda_{E},\qquad C_1,\dots,C_{\gamma-2}\in\Gamma_C.
$$
We use Theorem \ref{tSerre} for constructing an instanton bundle from the  union $Z$ of such pairwise disjoint curves. The curve $Z$ is  smooth and 
$$
\det(\cN_{Z\vert F})\cong\left\lbrace\begin{array}{ll} 
\cO_{F}(\ell- e)\otimes\cO_Z\quad&\text{if $\beta\ge1$,}\\
\cO_{F}(\ell+e)\otimes\cO_Z\quad&\text{if $\beta\le0$,}
\end{array}\right.
$$
because all the connected components of $Z$ are isomorphic to $\p1$ and
\begin{equation*}
(\ell\pm e)\ell\xi=1,\qquad (\ell- e)(\ell\xi-e\xi)=(\ell\pm e)\ell^2=0,\qquad (\ell+ e)e\xi=-1.
\end{equation*}
Thanks to equalities \eqref{Kuenneth} and \eqref{Serre} on ${\bF_1}$ we have
\begin{gather*}
h^2\big({F},\cO_{F}(-\ell+e)\big)=h^2\big({{\bF_1}},\cO_{{\bF_1}}(-\ell+e)\big)=h^0\big({{\bF_1}},\cO_{{\bF_1}}(-2\ell)\big)=0\\
h^2\big({F},\cO_{F}(-\ell-e)\big)=h^2\big({{\bF_1}},\cO_{{\bF_1}}(-\ell-e)\big)=h^0\big({{\bF_1}},\cO_{{\bF_1}}(2e-2\ell)\big)=0.
\end{gather*}
If $\beta\ge1$, then there is a vector bundle $\cF$ on $F$ such that $c_1(\cF)=\ell-e$, $c_2(\cF)=Z$ and fitting into sequence \eqref{seqSerre}. Thus $\cE:=\cF(-2\ell+e-\xi)$ fits into 
\begin{equation}
\label{seqEarnest}
0\longrightarrow\cO_{F}(-2\ell+e-\xi)\longrightarrow\cE\longrightarrow\cI_{Z\vert F}(-\ell-\xi)\longrightarrow0.
\end{equation}
If $\beta\le0$, then there is a vector bundle $\cF$ on $F$ such that $c_1(\cF)=\ell+e$, $c_2(\cF)=Z$ and fitting into sequence \eqref{seqSerre}. Thus $\cE:=\cF(-2\ell-\xi)$ fits into
\begin{equation}
\label{seqNonEarnest}
0\longrightarrow\cO_{F}(-2\ell-\xi)\longrightarrow\cE\longrightarrow\cI_{Z\vert F}(-\ell+e-\xi)\longrightarrow0.
\end{equation}
\end{construction}

It is easy to check that $c_1(\cE)=-h$ and $c_2(\cE)=\alpha\ell\xi-\beta e\xi+\gamma\ell^2$. 
\begin{remark}
\label{rBound}
Thanks to  inequalities \eqref{BoundInstanton1} we deduce $c_2(\cE)h\ge14$ if $\beta\ge1$ and $c_2(\cE)h\ge15$ if $\beta\le0$.
\end{remark}

We can now prove Theorem \ref{tConstruction} stated in the introduction.

\medbreak
\noindent{\it Proof of Theorem \ref{tConstruction}.}
First we consider the case $\beta\ge1$. 

We start the proof by dealing with the $\mu$--stability of $\cE$ with the help of Lemma \ref{lHoppe} and inequality \eqref{Hoppe}. The cohomology of sequence \eqref{seqEarnest} tensored by $\cO_{F}(-a\ell+be-c\xi)$ with $3a+2c+6\ge b$ yields the inequalities
\begin{align*}
h^0\big(F,\cE(-a\ell+be-c\xi)\big)\le  u+w\le v+w
\end{align*}
where
\begin{gather*}
u:=h^0\big(F,\cI_{Z\vert F}(-(a+1)\ell+be-(c+1)\xi)\big),\\
v:=h^0\big(F,\cO_{F}(-(a+1)\ell+be-(c+1)\xi)\big),\\
w:=h^0\big(F,\cO_{F}(-(a+2)\ell+be-(c+1)\xi)\big).
\end{gather*}
Thanks to equalities \eqref{Kuenneth}
\begin{gather*}
v=h^0\big({\bF_1},\cO_{{\bF_1}}(-(a+1)\ell+be)\big)h^0\big(\p1,\cO_{\p1}(-c-1)\big),\\
w=h^0\big({\bF_1},\cO_{{\bF_1}}(-(a+2)\ell+be)\big)h^0\big(\p1,\cO_{\p1}(-c-1)\big).
\end{gather*}

We claim that $w=0$. This is true if $h^0\big({\bF_1},\cO_{{\bF_1}}(-(a+2)\ell+be)\big)=0$. If $h^0\big({\bF_1},\cO_{{\bF_1}}(-(a+2)\ell+be)\big)\ge1$, then $b\ge 3a+6$ because $\cO_{\bF_1}(3\ell-e)$ is ample. Thus the inequality $3a+2c+6\ge b$ implies $c\ge0$, hence $h^0\big(\p1,\cO_{\p1}(-c-1)\big)=0$ and $w=0$

We claim that $u=0$. This is  true if $h^0\big({\bF_1},\cO_{{\bF_1}}(-(a+1)\ell+be)\big)=0$. If $h^0\big({\bF_1},\cO_{{\bF_1}}(-(a+1)\ell+be)\big)\ge1$, then $a\le-1$ because $\cO_{\bF_1}(\ell)$ is globally generated. The same argument used to prove that $w=0$ implies $c=-1$. Thus the inequality $3a+2c+6\ge b$ forces $a=-1$, $b\ge0$ and, consequently, $u=h^0\big(F,\cI_{Z\vert F}(be)\big)$. The linear system $\vert be\vert$ on $F$ contains a single divisor with support $E$, because the same is true for divisors in the linear system $\vert be\vert$ on ${\bF_1}$. As pointed out in Examples \ref{eA}, \ref{eB}, \ref{eC} no component of $Z$ is contained in $E$, it follows that $u=0$. Thus the proof of the $\mu$--stability of $\cE$ is complete.

In particular, the above discussion also implies $h^0\big(F,\cE\big)=0$. We will show below that $h^1\big(F,\cE(-D)\big)=0$ for each divisor $D\subseteq F$ with $h^0\big(F,\cO_F(D)\big)\ne0$. Thus we also deduce $h^1\big(F,\cE\big)=0$. Since $c_1(\cE)=-h$ by construction, it follows that $\cE$ is earnest instanton bundle. 

In order to check the aforementioned vanishing $h^1\big(F,\cE(-D)\big)=0$, assume that $\cO_F(D)\cong\cO_F(a\ell-be+c\xi)$. Since $\cO_F(\ell)$, $\cO_F(\ell-e_k)$ and $\cO_F(\xi)$ are globally generated, it follows that $a=D\ell\xi\ge0$, $a-b=D(\ell\xi-e\xi)\ge0$ and $c=D\ell^2\ge0$. 

The cohomology of sequence \eqref{seqEarnest} tensored by $\cO_F(-D)$ returns
\begin{align*}
h^1\big(F,\cE(-D)\big)&\le h^1\big(F,\cO_F(-(a+2)\ell+be-(c+1)\xi)\big)+\\
&+h^1\big(F,\cI_{Z\vert F}(-(a+1)\ell+be-(c+1)\xi)\big).
\end{align*}
Thus it suffices to check the vanishing of the summands on the right for each choice of integers $a,c\ge0$ and $b\le a$.

Equalities \eqref{Kuenneth} and the aforementioned restrictions imply that the first righthand summand vanishes. Moreover, a similar argument applied to the cohomology of sequence
\begin{equation}
\label{seqIdeal}
0\longrightarrow \cI_{Z\vert F}\longrightarrow \cO_{F}\longrightarrow \cO_{Z}\longrightarrow0
\end{equation}
tensored by $\cO_F(-\ell-\xi-D)$ yields
\begin{align*}
h^1\big(F,\cI_{Z\vert F}(-(a+1)\ell&+be-(c+1)\xi)\big)=\\
&=h^0\big(Z,\cO_F(-(a+1)\ell+be-(c+1)\xi)\otimes\cO_Z\big).
\end{align*}
The definition of $Z$ combined with the restrictions above on $a$, $b$ and $c$ returns also $h^1\big(F,\cI_{Z\vert F}(-(a+1)\ell+be-(c+1)\xi)\big)=0$.

The restriction of sequence \eqref{seqEarnest} to each line $L\subseteq F$ not intersecting $Z$ is
$$
0\longrightarrow\cO_{\p1}(-1)\longrightarrow\cE\otimes\cO_L\longrightarrow\cO_{\p1}\longrightarrow0:
$$
hence $\cE\otimes\cO_L\cong\cO_{\p1}\oplus\cO_{\p1}(-1)$ for each general line $L\subseteq F$.

Now we turn our attention to the case $\beta\le 0$. The proof in this case runs along the same lines of the one in the case $\beta\ge1$. In particular, the assertion of  $\cE\otimes\cO_L$ is immediate to prove. 

In order to prove the $\mu$--stability of $\cE$, it suffices to check that
\begin{gather*}
w:=h^0\big(F,\cO_{F}(-(a+2)\ell+be-(c+1)\xi)\big),\\
u:=h^0\big(F,\cI_{Z\vert F}(-(a+1)\ell+(b+1)e-(c+1)\xi)\big),
\end{gather*}
vanish when $3a+2c+6\ge b$. Equalities \eqref{Kuenneth} imply that $c\le -1$ if either $w$ or $u$ is non--zero as in the proof of the case $\beta\ge1$. If $w\ne0$, then we obtain $c\ge0$, a contradiction. If $u\ne0$, then $v:=h^0\big(Z,\cO_F(-(a+1)\ell+(b+1)e-(c+1)\xi)\big)\ne0$. Since $\cO_{\bF_1}(3\ell-e)$ is ample, it follows that $b\ge3a+2$ thanks to the Nakai--Moishezon criterion, hence $c\ge-2$, i.e. $c=-1,-2$. 

If $c=-2$, then $b=3a+2$. Thus $\cO_{\bF_1}(-(a+1)\ell+(b+1)e)=\cO_{\bF_1}$ thanks to the Nakai--Moishezon criterion, hence $u=h^0\big(F,\cI_{Z\vert F}(\xi)\big)$. The dimension on the right vanishes if $\gamma\ge3$, because each divisor in $\vert \xi\vert$ is transversal to the curves in $\Gamma_C$. If $\gamma=2$, then $\alpha\ge4$, hence $Z$ contains curves both in $\Gamma_A$ and in $\Lambda$: for a general choice of $Z$ such curves are not contained in the same divisor  in $\vert \xi\vert$, thus again $u=0$.

If $c=-1$, then $3a+4\ge b\ge 3a+2$. If $b=3a+2$, as in the case $c=-2$ one deduces $u=h^0\big(F,\cI_{Z\vert F}\big)=0$, because $\deg(Z)\ge3$. If $b=3a+3$, then $-(a+1)\ell+(b+1)e$ is the class of a line in ${\bF_1}$, which must be $E$ thanks to Example \ref{eLine}, hence $u=h^0\big(F,\cI_{Z\vert F}(e)\big)$. Since $Z$ contains at least a curve in $\Gamma_A\cup\Gamma_C$, it follows that $u=0$ thanks to Examples \ref{eA} and \ref{eC}. If $b=3a+4$, then the linear system $\vert -(a+1)\ell+(b+1)e\vert$ on ${\bF_1}$ contains a curve of degree $2$ which can be either $2E$ or an irreducible conic. In the first case $Z\subseteq E$ which is not possible as we just proved above, hence we have only to handle the second case. The genus formula for the arithmetic genus implies that each integral conic on ${\bF_1}$ is in the linear system $\vert\ell-e\vert$, hence $u=h^0\big(F,\cI_{Z\vert F}(\ell-e)\big)$. No curves in $\Lambda$ are contained in a divisor in the linear system $\vert \ell-e\vert$ on $F$, hence $u=0$ again. Thus the proof of the $\mu$--stability is complete. 

Finally we compute $\delta:=h^1\big(F,\cE(-e)\big)$ and $\epsilon:=h^1\big(F,\cE(-e-\xi)\big)$. The cohomology of sequences \eqref{seqNonEarnest} and \eqref{seqIdeal} tensored by $\cO_F(-e-\xi)$ and $\cO_F(-\ell-2\xi)$ respectively and equalities \eqref{Kuenneth} yield
\begin{gather*}
h^1\big(F,\cI_{Z\vert F}(-\ell-2\xi)\big)-1\le \epsilon\le h^1\big(F,\cI_{Z\vert F}(-\ell-2\xi)\big),\\
h^1\big(F,\cI_{Z\vert F}(-\ell-2\xi)\big)=h^0\big(Z,\cO_F(-\ell-2\xi)\otimes\cO_Z\big).
\end{gather*}
The dimension on the right in the latter equality is $1-\beta$ because $(\ell+2\xi)\ell\xi=1$, $(\ell+2\xi)\ell^2=2$ and $(\ell+2\xi)e\xi=0$. Similarly we obtain  $\delta=1-\beta\ge1$, hence $\cE$ is not earnest by Corollary \ref{cExotic}.
\qed
\medbreak

The bundles $\cE$ obtained via Construction \ref{conInstanton} are $\mu$--stable, hence simple and indecomposable. We have $h^1\big(F,\cO_F(-\ell+e)\big)=0$ and $h^1\big(F,\cO_F(-\ell-e)\big)=1$. In the next proposition we deal with the component of the moduli space were they sit. 

\begin{proposition}
\label{pModuli}
The bundle $\cE$ obtained via Construction \ref{conInstanton} represents a smooth point in a component
$$
\cS\cI_F^0(\alpha\ell\xi-\beta e\xi+\gamma \ell^2)\subseteq\cS\cI_F(\alpha\ell\xi-\beta e\xi+\gamma \ell^2)
$$
of dimension 
$$
\dim\Ext^1_F\big(\cE,\cE\big)=6\alpha-2\beta+4\gamma-27.
$$
\end{proposition}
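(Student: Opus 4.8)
The plan is to place $\cE$ in the deformation theory of the moduli space of simple sheaves and reduce the whole statement to the single vanishing $\Ext^2_F(\cE,\cE)=0$. By Theorem \ref{tConstruction} the bundle $\cE$ is $\mu$--stable, hence simple, so it represents a point of $\cS_F(2;-h,\alpha\ell\xi-\beta e\xi+\gamma\ell^2)$, and since it is an instanton this point lies in the open subset $\cS\cI_F(\alpha\ell\xi-\beta e\xi+\gamma\ell^2)$. At a simple bundle the Zariski tangent space of the moduli space is $\Ext^1_F(\cE,\cE)$ and the obstructions live in $\Ext^2_F(\cE,\cE)$; thus, once I show $\Ext^2_F(\cE,\cE)=0$, the point is smooth, lies on a unique component $\cS\cI_F^0(\alpha\ell\xi-\beta e\xi+\gamma\ell^2)$, and the local dimension equals $\dim\Ext^1_F(\cE,\cE)$. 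Since $\cE$ is simple with $\Ext^3_F(\cE,\cE)=0$, formula \eqref{Ext12} applies, so the same vanishing simultaneously pins down the dimension.

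For the numerics I would use $\omega_F\cong\cO_F(-h)$, i.e. $h=3\ell-e+2\xi$, and the intersection table of Section \ref{sFano} ($\ell^2\xi=1$, $e^2\xi=-1$, $\ell e=\xi^2=0$) to get $c_2(\cE)h=3\alpha-\beta+2\gamma$ and $\deg(F)=h^3=48$. Plugging these into \eqref{Ext12} gives $\dim\Ext^1_F(\cE,\cE)-\dim\Ext^2_F(\cE,\cE)=2(3\alpha-\beta+2\gamma)-24-3=6\alpha-2\beta+4\gamma-27$, so the asserted dimension is exactly the content of the vanishing.

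To prove $\Ext^2_F(\cE,\cE)=0$ I would apply $\Hom_F(-,\cE)$ to the defining Serre sequence, written as $0\to\mathcal L_1\to\cE\to\cI_{Z\vert F}\otimes\mathcal L_2\to0$ (namely \eqref{seqEarnest} when $\beta\ge1$ and \eqref{seqNonEarnest} when $\beta\le0$). The resulting long exact sequence squeezes $\Ext^2_F(\cE,\cE)$ between $\Ext^2_F(\cI_{Z\vert F}\otimes\mathcal L_2,\cE)$ and $\Ext^2_F(\mathcal L_1,\cE)$, so it suffices to annihilate both. The term $\Ext^2_F(\mathcal L_1,\cE)=H^2\big(F,\cE\otimes\mathcal L_1^\vee\big)$ I would compute by twisting the Serre sequence back by $\mathcal L_1^\vee$ and using the Künneth formula \eqref{Kuenneth}, together with the fact that $Z$ is a disjoint union of smooth rational curves (so the auxiliary $\cO_Z$--cohomology vanishes); the same device gives $\Ext^2_F(\mathcal L_2,\cE)=H^2\big(F,\cE\otimes\mathcal L_2^\vee\big)=0$. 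Then the ideal sequence $0\to\cI_{Z\vert F}\to\cO_F\to\cO_Z\to0$ twisted by $\mathcal L_2$ reduces $\Ext^2_F(\cI_{Z\vert F}\otimes\mathcal L_2,\cE)$ to $\Ext^2_F(\mathcal L_2,\cE)$ and $\Ext^3_F(\cO_Z\otimes\mathcal L_2,\cE)$.

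The last term is where the real work lies, since $\cO_Z$ is not locally free; this is the step I expect to be the main obstacle. As $Z$ is a locally complete intersection of codimension $2$, one has $\sExt^q_F(\cO_Z,\cO_F)=0$ for $q\ne2$ and $\sExt^2_F(\cO_Z,\cO_F)\cong\det\cN_{Z\vert F}$, so the local--to--global spectral sequence degenerates and yields $\Ext^3_F(\cO_Z\otimes\mathcal L_2,\cE)\cong H^1\big(Z,\det\cN_{Z\vert F}\otimes\cE\vert_Z\otimes\mathcal L_2^\vee\vert_Z\big)$. Feeding in isomorphism \eqref{Normal} with $S=0$, namely $\cN_{Z\vert F}\cong\cF\vert_Z$ and $\det\cN_{Z\vert F}\cong\cO_F(c_1(\cF))\vert_Z$, all the line-bundle twists telescope and this group collapses to $H^1\big(Z,\cN_{Z\vert F}\big)$. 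Since each connected component of $Z$ is a $\p1$ whose normal bundle is $\cO_{\p1}\oplus\cO_{\p1}(1)$, $\cO_{\p1}^{\oplus2}$ or $\cO_{\p1}\oplus\cO_{\p1}(-1)$ by Examples \ref{eLine}, \ref{eA}, \ref{eB} and \ref{eC}, this $H^1$ vanishes, completing the proof. The delicate points are the bookkeeping of the telescoping twists and the separate degree checks for the components coming from $\Gamma_A$, $\Gamma_B$, $\Gamma_C$ and $\Lambda$; the $F$--cohomology inputs are all routine Künneth calculations.
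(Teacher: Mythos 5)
Your argument is correct and reaches the same two pillars as the paper's proof: everything reduces to $\Ext^2_F\big(\cE,\cE\big)=0$ (smoothness plus, via Lemma \ref{lExt3} and equality \eqref{Ext12}, the dimension count $2(3\alpha-\beta+2\gamma)-24-3$), and that vanishing is ultimately fed by the rationality of the components of $Z$ together with the explicit normal bundles of Examples \ref{eLine}--\ref{eC}. The technical route differs in one place: the paper never leaves sheaf cohomology --- it tensors the Serre sequence by $\cE^\vee\cong\cE(h)$ and bounds $h^2\big(F,\cE\otimes\cE^\vee\big)$ by $h^2\big(F,\cE(\ell-e+\xi)\big)$ (killed by Serre duality and Table 1) plus $h^2\big(F,\cE\otimes\cI_{Z\vert F}(2\ell+\xi)\big)$, which the ideal sequence reduces to $h^1\big(Z,\cN_{Z\vert F}\big)$ and $h^1\big(Z,\cO_F(\ell+\xi)\otimes\cO_Z\big)$. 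You instead apply $\Hom_F(-,\cE)$ to the Serre sequence, which forces you through $\Ext^3_F\big(\cO_Z\otimes\mathcal L_2,\cE\big)$ and hence through the local-to-global spectral sequence and $\sExt^2_F\big(\cO_Z,\cO_F\big)\cong\det\cN_{Z\vert F}$; your telescoping to $H^1\big(Z,\cN_{Z\vert F}\big)$ does check out in both cases $\beta\ge1$ and $\beta\le0$, so this is a legitimate (if heavier) alternative, and the intermediate groups you must kill are genuinely different twists of $\cE$ from the paper's. The one piece of the paper's proof you omit is the construction of the irreducible parameter spaces $\Sigma_\pm$ (and the $\p1$-bundle $\Pi$ from Theorem \ref{tSerre}) dominating the locus of constructed bundles: your deformation-theoretic argument shows each $\cE$ is a smooth point of \emph{some} component of the stated dimension, but it is the irreducibility of $\Sigma_\pm$ that guarantees all bundles from Construction \ref{conInstanton} land in one and the same component, which is what makes the notation $\cS\cI_F^0(\alpha\ell\xi-\beta e\xi+\gamma\ell^2)$ well defined.
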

\begin{proof}
Thanks to Theorem \ref{tSerre}, if $\beta\le0$ the bundles are parameterized by the points of a projective bundle $\Pi$ with fibre $\p1$ over the open subset $\Sigma_-$  corresponding to smooth curves inside $\Gamma_A^{\times\alpha-3}\times\Lambda^{\times 1-\beta}\times \Gamma_C^{\times\gamma-2}$, hence there is a morphism
$$
\psi_-\colon\Pi\to \cS_F(2;0,\alpha\ell\xi-\beta e\xi+\gamma \ell^2).
$$
Thanks to Examples \ref{eLine}, \ref{eA}, \ref{eC}, we know that $\Sigma_-$ is irreducible, hence the same is true for $\Pi$. If $\beta\ge1$, Construction \ref{conInstanton} yields the existence of another morphism
$$
\psi_+\colon\Sigma_+\to \cS_F(2;0,\alpha\ell\xi-\beta e\xi+\gamma \ell^2),
$$
defined over the open irreducible subset $\Sigma_+\subseteq \Gamma_A^{\times\alpha-2-\beta}\times\Lambda^{\times \beta-1}\times \Gamma_C^{\times\gamma-2}$ corresponding to smooth curves. Since both $\Sigma_+$ and $\Pi$ are irreducible, it follows that $\cS_+:=\im(\psi_+)$ and $\cS_-:=\im(\psi_-)$ are irreducible too, hence they are contained in at least one component of $\cS\cI_F(\alpha\ell\xi-\beta e\xi+\gamma \ell^2)$. 

We show below that the points in $\cS_+$ and $\cS_-$ are smooth, hence there is actually a unique  component containing them and we denote it by $\cS\cI_F^0(\alpha\ell\xi-\beta e\xi+\gamma \ell^2)$. In order to prove the smoothness of $\cS_+$ and $\cS_-$ it suffices to check that
$$
\dim\Ext^2_F\big(\cE,\cE\big)=h^2\big(F,\cE\otimes\cE^\vee\big)=0.
$$
and, in what follows, we make the computations only for the case $\beta\le0$, because the argument in the case $\beta\ge1$ is similar.

The cohomology of sequence \eqref{seqNonEarnest} tensored by $\cE^\vee\cong\cE(h)$ and $\cO_F(\ell-e+\xi)$  yields
$$
h^2\big(F,\cE\otimes\cE^\vee\big)\le h^2\big(F,\cE\otimes\cI_{Z\vert F}(2\ell+\xi)\big)+h^2\big(F,\cE(\ell-e+\xi)\big).
$$
Equality \eqref{Serre1} implies $h^2\big(F,\cE(\ell-e+\xi)\big)=h^1\big(F,\cE(-\ell+e-\xi)\big)$ and we checked in Proposition \ref{pTable} that the latter is zero.

The cohomology of sequences \eqref{seqIdeal} and \eqref{seqNonEarnest} tensored by $\cE(2\ell+\xi)$ and $\cO_F(2\ell+\xi)$ respectively yields
$$
h^2\big(F,\cE\otimes\cI_{Z\vert F}(2\ell+\xi)\big)\le h^2\big(F,\cI_{Z\vert F}(\ell+\xi)\big)+h^1\big(Z,\cE(2\ell+\xi)\otimes\cO_Z\big).
$$
Equality \eqref{Normal} and the definition of $\cE$ imply $\cE(2\ell+\xi)\otimes\cO_Z\cong\cN_{Z\vert F}$. Since $Z$ is the non--empty disjoint union of the curves $A_j$, $B_j$, $C_j$, it is easy to check that the second summand on the right vanishes. The cohomology of sequence  \eqref{seqIdeal} tensored by $\cO_F(\ell+\xi)$ finally yields 
$$
h^2\big(F,\cI_{Z\vert F}(\ell+\xi)\big)\le h^1\big(Z,\cO_F(\ell+\xi)\otimes\cO_Z\big).
$$
Again the definition of $Z$ implies that the dimension on the right vanishes.

Finally, the value of $\dim\Ext^1_F\big(\cE,\cE\big)$ can be deduced from the above vanishing,  Lemma \ref{lExt3} and equality \eqref{Ext12}.
\end{proof}

\begin{remark}
When $\beta\le0$, the last inequality \eqref{BoundInstanton2} implies that all the points in $\cS\cI_F(\alpha\ell\xi-\beta e\xi+\gamma \ell^2)$ represent non--earnest instanton bundles. When $\beta\ge1$ the general element in $\cS\cI_F^0(\alpha\ell\xi-\beta e\xi+\gamma \ell^2)$ is earnest, by semicontinuity.
\end{remark}

\section{Minimal instanton bundles}
\label{sMinimal}
If $\cE$ is an instanton bundle on a Fano threefold $X$, then $c_2(\cE) h$ is bounded from below. Indeed
\begin{equation*}
c_2(\cE) h\ge k_X:=\left\lbrace\begin{array}{ll} 
1\quad&\text{if $i_X=4,3$,}\\
2\quad&\text{if $i_X=2$,}\\
\left\lceil\frac{\deg(X)}4\right\rceil\quad&\text{if $i_X=1$}
\end{array}\right.
\end{equation*}
(see \cite[Corollary 4.2]{C--C--G--M}). We say that $\cE$ is {\sl minimal} if $c_2(\cE) h$ is as small as possible.

On the one hand, if $X=F$, then $\deg(X)=48$. On the other hand, $c_2(\cE)h\ge14$ by Remark \ref{rBound} for each instanton bundle $\cE$ on $F$, hence the above general inequality is certainly not sharp for $F$.

Anyhow, taking either $\alpha=4$ and $\beta=\gamma=2$ or $\alpha=\gamma=3$ and $\beta=1$ in Construction \ref{conInstanton} one obtains the existence of instanton bundles for which the equality is attained: thus minimal instanton bundles satisfy $c_2(\cE)h=14$.

\medbreak
\noindent{\it Proof of Theorem \ref{tMinimal}.}
On the one hand, inequalities \eqref{BoundInstanton1} easily imply that if a minimal instanton bundle $\cE$ exists on $F$, then $c_2(\cE)$ is either $3\ell\xi- e\xi+3 \ell^2$ or $4\ell\xi-2 e\xi+2 \ell^2$. On the other hand, Theorem \ref{tConstruction} for $\beta\ge1$ implies that instanton bundles $\cE$ with charge either $4\ell\xi-2e\xi+2 \ell^2$ or $3\ell\xi-e\xi+3 \ell^2$ certainly exist.

Let $\cE$ be a minimal instanton bundle on $F$ so that $c_2(\cE)h=14$. The cohomology of 
\begin{equation*}
0\longrightarrow\cE(-\ell)\longrightarrow\cE(e-\ell)\oplus\cE\longrightarrow\cE(e)\longrightarrow0
\end{equation*}
 and equality \eqref{Serre1} yield $\delta=h^1\big(F,\cE(-e)\big)=h^2\big(F,\cE(e)\big)=0$, hence $\cE$ is earnest, thanks to Corollary \ref{cExotic}. Thus $\epsilon=h^1\big(F,\cE(-e-\xi)\big)=0$, thanks to Corollary \ref{cBound}. Moreover, Theorem \ref{tMonad} yields that $\cE$ is the kernel of a suitable map between decomposable bundles on $F$.

From now on we will assume $c_2(\cE)=4\ell\xi-2e\xi+2 \ell^2$. Theorem \ref{tMonad} implies the existence of an exact sequence of the form
\begin{equation}
\label{seqMinimalEarnest1}
0\longrightarrow \cE\longrightarrow \cO_{F}(-\ell-\xi)^{\oplus2}\oplus\cO_F(-2\ell+e)\mapright{M}\cO_{F}(-\ell)\longrightarrow 0,
\end{equation}
where $M=(m_1,m_2,m)$ where $m_1,m_2\in H^0\big(F,\cO_F(\xi)\big)$ and $m\in H^0\big(F,\cO_F(\ell-e)\big)$. Since $M$ is surjective and the zero loci of $m_i$ and $m$ intersect, it follows that $m_1,m_2$ are linearly independent. If $m=0$, then $\cE\cong \cO_F(-\ell-2\xi)\oplus\cO_F(-2\ell+e)$ which  is not $\mu$--semistable, because $\mu(\cO_F(-\ell-2\xi))=-28\ne-20=\mu(\cO_F(-2\ell-e))$. Thus $m\ne0$.

We claim that the restriction of $\cE$ to the general line $L\subseteq F$ is isomorphic to $\cO_{\p1}\oplus\cO_{\p1}(-1)$. Indeed, since $m_1,m_2$ are linearly independent, it follows that the restrictions of $m_1$ and $m_2$ cannot vanish simultaneously on $L$. Since the class of $L$ in $A^2(F)$ is $e\xi$, it follows that the restriction of sequence \eqref{seqMinimalEarnest1} implies that $\cE\otimes\cO_L\cong\cO_{\p1}\oplus\cO_{\p1}(-1)$, as claimed in the statement.

We claim that $\cE$ is aCM. To prove the claim, notice that the cohomology of the dual of sequence  \eqref{seqMinimalEarnest1} tensored by $\cO_F((t-1)h)$ yields
\begin{align*}
h^1\big(F,\cE(th)\big)&\le 2h^1\big(F,\cO_F((t-1)h+\ell+\xi)\big)+\\
&+h^1\big(F,\cO_F((t-1)h+2\ell-e)\big)+h^2\big(F,\cO_F((t-1)h+\ell)\big).
\end{align*}
Thus the assertion follows if we prove that the summands on the right vanish when $t\ge1$. Thanks to equality \eqref{Kuenneth} and the restriction $t\ge1$ such vanishings are equivalent to prove
\begin{align*}
h^1\big(\bF_1,\cO_{\bF_1}((3t-2)\ell-(t-1)e)\big)&=h^1\big(\bF_1,\cO_{\bF_1}((3t-1)\ell-te)\big)=\\
&=h^2\big(\bF_1,\cO_{\bF_1}((3t-2)\ell-(t-1)e)\big)=0.
\end{align*}
Taking into account that $\omega_{\bF_1}\cong\cO_{\bF_1}(-3\ell+e)$, equality \eqref{Serre1} yields trivially
$$
h^2\big(\bF_1,\cO_{\bF_1}((3t-2)\ell-(t-1)e)\big)=h^0\big(\bF_1,\cO_{\bF_1}((-3t-1)\ell+te)\big)=0.
$$
The other vanishings follow from the ampleness of $\cO_{\bF_1}(3\ell-e)$ and the Nakai criterion. We conclude that $\cE$ is aCM.

We claim that $\cE$ is $\mu$--stable. To prove the claim recall that $\cO_{F}(\xi)\cong\varphi^*\cO_Q(\sigma_1)$, $\cO_F(\ell-e)\cong\varphi^*\cO_Q(\sigma_2)$. Thus, each fibre of the map $\varphi\colon F\to Q$ is a  curve in the class $(\ell-e)\xi$, hence a curve $B\in\Gamma_B$. The restriction of sequence \eqref{seqMinimalEarnest1} to $B\cong\p1$ yields $\cE\otimes\cO_B\cong\cO_{\p1}^{\oplus2}$. Since the morphism $\varphi$ is flat, it follows that $R^i\varphi_*\cE(\ell)=0$ for $i\ge0$ by \cite[Corollaries III.11.2 and III.12.9]{Ha2}. 

Pushing forward sequence \eqref{seqMinimalEarnest1} tensored by $\cO_F(\ell)$ we obtain the exact sequence
\begin{equation}
\label{seqMinimalQ}
0\longrightarrow \cA\longrightarrow \cO_{Q}(-\sigma_1)^{\oplus2}\oplus\cO_Q(-\sigma_2)\mapright{M}\cO_{Q}\longrightarrow 0,
\end{equation}
where $\cA:=\varphi_*\cE(\ell)$. We claim that $\cA$ is $\mu$--stable with respect to $\cO_Q(\sigma_1+\sigma_2)$. Indeed, the projection formula implies $m_1,m_2\in H^0\big(Q,\cO_Q(\sigma_1)\big)$, $m\in H^0\big(Q,\cO_Q(\sigma_2)\big)$. Thanks to Lemma \ref{lHoppe} we know that $\cA$ is $\mu$--stable with respect to $\cO_Q(\sigma_1+\sigma_2)$ if and only if  
\begin{equation}
\label{StableA}
a+b\ge-3/2\qquad \Rightarrow\qquad h^0\big(Q,\cA(-a\sigma_1-b\sigma_2)\big)=0.
\end{equation} 

Let $id_Q(a,b)$ be the identity of $\cO_Q(-a\sigma_1-b\sigma_2)$. The cohomology of sequence \eqref{seqMinimalQ} tensored by $\cO_Q(-a\sigma_1-b\sigma_2)$ implies that $H^0\big(Q,\cA(-a\sigma_1-b\sigma_2)\big)=0$ is the kernel of the map $M_Q(a,b)$ induced  in cohomology by $M\otimes id_Q(a,b)$. The same kind of argument used in the proof of Theorem \ref{tConstruction} yields
$$
h^0\big(Q,\cO_Q(-a\sigma_1-(b+1)\sigma_2)\big)=h^0\big(Q,\cO_Q(-(a+1)\sigma_1-b\sigma_2)\big)=0
$$
unless when $(a,b)$ is either $(0,-1)$ or $(-1,0)$. Since it is easy to check that both $M_Q(0,-1)$ and $M_Q(-1,0)$ are injective thanks to the restrictions on the entries $m_i$ and $m$, it follows that $\cA$ is $\mu$--stable.

We now prove the part of the statement concerning $\cS\cI_F(4\ell\xi-2e\xi+2 \ell^2)$. Since we already checked above the vanishing $R^i\varphi_*\cE(\ell)=0$ for $i\ge0$, it follows from  \cite[Exercise III.8.2]{Ha2} that $h^i\big(F,\cE\otimes\cE^\vee\big)=h^i\big(Q,\cA\otimes\cA^\vee\big)$ for each $i\ge0$. Thus $\cE$ is simple and $h^2\big(F,\cE\otimes\cE^\vee\big)=h^3\big(F,\cE\otimes\cE^\vee\big)=0$, thanks to Lemma \ref{lExt3}. Equality \eqref{Ext12} then returns $h^1\big(F,\cE\otimes\cE^\vee\big)=1$, hence $\cE$ corresponds to a smooth point of a unique $1$--dimensional component in $\cS\cI_F(4\ell\xi-2e\xi+2 \ell^2)$. 

It follows that all the components of $\cS\cI_F(4\ell\xi-2e\xi+2 \ell^2)$ are smooth and $1$--dimensional. The sheaf $\sHom_F\big(\cO_{F}(-\ell-\xi)^{\oplus2}\oplus\cO_F(-2\ell+e),\cO_{F}(-\ell)\big)$ is globally generated, hence $\cS\cI_F(4\ell\xi-2e\xi+2 \ell^2)$ is dominated by an open and non--empty subset of the affine space $\Hom_F\big(\cO_{F}(-\ell-\xi)^{\oplus2}\oplus\cO_F(-2\ell+e),\cO_{F}(-\ell)\big)$. Thus $\cS\cI_F(4\ell\xi-2e\xi+2 \ell^2)$ is irreducible and unirational, hence it is rational by the L\"uroth theorem.

Conversely, we will prove in the last part of the proof that if $\cA$ is a rank $2$ bundle on $Q$ with $c_1(\cA)=-2\sigma_1-\sigma_2$, $c_2(\cA)=2$ and $\mu$--stable with respect to $\cO_Q(\sigma_1+\sigma_2)$, then $\cE:=\varphi^*\cA(-\ell)$ is a $\mu$--stable instanton bundle with $c_2(\cE)=4\ell\xi-2 e\xi+2 \ell^2$. If this is true, then each instanton bundle $\cE$ with $c_2(\cE)=4\ell\xi-2 e\xi+2 \ell^2$ is necessarily $\mu$--stable, because $\cE\cong\varphi^*(\varphi_*\cE(\ell))(-\ell)$ thanks to the first part of the proof.

We claim that $h^0\big(F,\cE\big)=h^1\big(F,\cE\big)=0$. To this purpose we first check that $\cA^\vee\cong\cA(2\sigma_1+\sigma_2)$ is regular in the sense of Castelnuovo--Mumford. Indeed, Lemma \ref{lHoppe} yields $h^0\big(Q,\cA^\vee(-\sigma_1-\sigma_2)\big)=0$. Similarly $h^2\big(Q,\cA^\vee(-2\sigma_1-2\sigma_2)\big)=0$ and $h^2\big(Q,\cA^\vee(-\sigma_1-\sigma_2)\big)=0$, thanks to equality \eqref{Serre}. Equality \eqref{RRsurface} yields $h^1\big(Q,\cA^\vee(-\sigma_1-\sigma_2)\big)=0$. 

Thus the regularity of $\cA^\vee$ is proved, hence it is globally generated (see \cite{Mu}). It follows the existence of an exact sequence 
\begin{equation}
\label{seqAQ}
0\longrightarrow\cO_Q(-2\sigma_1-\sigma_2)\longrightarrow \cA\longrightarrow\cI_Y\longrightarrow0,
\end{equation}
where $Y$ is a set of $2$ distinct points. The pull--back of this exact sequence induces on $F$ the exact sequence
$$
0\longrightarrow\cO_F(-2\ell+e-2\xi)\longrightarrow \cE\longrightarrow\cI_Z(-\ell)\longrightarrow0,
$$
where $\cE(\ell):=\varphi^*\cA$ and $Z=\varphi^{-1}(Y)=B_1\cup B_2$ for disjoint curves $B_i\in\Gamma_B$. We deduce that  $c_1(\cE)=-h$ and $c_2(\cE)=4\ell\xi-2e\xi+2 \ell^2$ and $h^0\big(F,\cE\big)=0$.  

Equalities \eqref{Kuenneth} imply $h^1\big(F,\cO_F(-2\ell+e-2\xi)\big)=0$. Moreover, $\ell B_i=-1$ and $B_i\cong\p1$, hence $h^0\big(B_i,\cO_{B_i}(-\ell)\big)=0$: thus the cohomology of the sequence
$$
0\longrightarrow\cI_Z(-\ell)\longrightarrow \cO_F(-\ell)\longrightarrow\cO_Z(-\ell)\longrightarrow0
$$
returns $h^1\big(F,\cI_Z(-\ell)\big)=0$, hence $h^1\big(F,\cE\big)=0$. Thus the claim on the cohomology of $\cE$ is completely proved.

We claim that $\cE$ is $\mu$--stable. To prove the claim we will make use of Lemma \ref{lHoppe} and inequality \eqref{Hoppe}. It is not difficult to check that $h^0\big(F,\cO_F(-(a+2)\ell+(b+1)e-(c+2)\xi)\big)=0$ using equality \eqref{Kuenneth} when $3a+2c+6\ge b$. Thus the cohomology of sequence \eqref{seqAQ} tensored by $\cO_F(-a\ell+be-c\xi)$ yields 
\begin{align*}
h^0\big(F,\cE(-a\ell+be-c\xi)\big)&\le  h^0\big(F,\cI_Z(-(a+1)\ell+be-c\xi)\big)\le \\
&\le  h^0\big(F,\cO_F(-(a+1)\ell+be-c\xi)\big)
\end{align*}
if  $3a+2c+6\ge b$. One easily checks that the dimension on the right is zero, when $(a,b,c)$ is not one of the following: $(-1,0,0)$, $(-1,1,0)$, $(-1,2,0)$, $(-2,-1,0)$, $(-1,3,0)$, $(-2,0,0)$, $(-1,0,-1)$, $(-1,1,-1)$.

Since the curves in $\Gamma_B$ are not contained in $E$ (see Example \ref{eB}), it follows that we can exclude the cases $(-1,0,0)$, $(-1,1,0)$, $(-1,2,0)$, $(-1,3,0)$. Since $Z$ has two distinct components then also the case $(-2,-1,0)$ cannot occur. Thus we have only to deal with 
$$
h^0\big(F,\cE(2\ell)\big),\qquad h^0\big(F,\cE(\ell+\xi)\big),\qquad h^0\big(F,\cE(\ell+e+\xi)\big).
$$
Thanks to \cite[Exercises III.8.1 and III.8.3]{Ha2} and the isomorphisms $\varphi_*\cE(\ell)\cong\cA$ and $\varphi_*\cO_F(e)\cong\cO_Q\oplus\cO_Q(-\ell+e)$ we deduce
\begin{gather*}
h^0\big(F,\cE(2\ell)\big)=h^0\big(Q,\cA(\sigma_2)\oplus\cA\big),\qquad h^0\big(F,\cE(\ell+\xi)\big)=h^0\big(Q,\cA(\sigma_1)\big),\\
h^0\big(F,\cE(\ell+e+\xi)\big)=h^0\big(Q,\cA(\sigma_1)\oplus\cA(\sigma_1-\sigma_2)\big).
\end{gather*}
Their vanishing follows from the $\mu$--stability of $\cA$, thanks to implication \eqref{StableA}. We conclude that $\cE$ is also $\mu$--stable. 

Thanks to the previous computations, it then follows that it is  an instanton bundle as claimed. The argument for instanton bundles $\cE$ with $c_2(\cE)=3\ell\xi-e\xi+3 \ell^2$ is completely analogous.
\qed
\medbreak

Let $X$ be a threefold endowed with a very ample line bundle $\cO_X(h)$. Recall that a vector bundle $\cF$ on $X$ is called {\sl weakly Ulrich (with respect to $\cO_X(h)$)} if $h^i\big(X,\cF(th)\big)=0$ if $i=0$ and $t\le -2$, or $i=1$ and $t\ne -1,-2$, or $i=2$ and $t\ne -2,-3$, or $i=3$ and $t\ge-2$. The vector bundle $\cF$ is called {\sl Ulrich (with respect to $\cO_X(h)$)} if it is aCM and $h^0\big(X,\cF(-h)\big)=h^3\big(X,\cF(-3h)\big)=0$. See \cite{E--S--W} for the general definition and properties of weakly Ulrich and Ulrich bundles: in particular it is proved therein that each Ulrich bundle is weakly Ulrich.

The link between minimal instanton bundles on a Fano threefold $X$ with $i_X\ge3$ and weakly Ulrich bundles is given by the following result.

\begin{proposition}
Let $X$ be a Fano threefold with $i_X\ge3$.

A rank $2$ vector bundle $\cE$ on $X$ is a minimal instanton bundle if and only if $\cF:=\cE((2-q_X)h)$ is weakly Ulrich with
\begin{equation}
\label{Chern}
c_1(\cF)=(4-i_X)h,\qquad c_2(\cF)h=k_X+(2+q_X-i_X)(2-q_X)\deg(X).
\end{equation}
\end{proposition}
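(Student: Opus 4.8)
The bundle $\cF$ is only a twist of $\cE$, so the plan is to read the statement as a dictionary. First I would record that a Fano threefold with $i_X\ge3$ is $\p3$ (whence $i_X=4$, $q_X=2$) or the smooth quadric $Q\subseteq\p4$ (whence $i_X=3$, $q_X=1$); in either case $\Pic(X)=\bZ h$ and $k_X=1$. The rank--two twist formulas give $c_1(\cF)=c_1(\cE)+2(2-q_X)h$ and $c_2(\cF)=c_2(\cE)+(2-q_X)c_1(\cE)h+(2-q_X)^2h^2$, so that $c_1(\cF)=(4-i_X)h$ is equivalent to $c_1(\cE)=(2q_X-i_X)h$ and, dotting with $h$, the prescribed value of $c_2(\cF)h$ is equivalent to $c_2(\cE)h=k_X$, i.e. to minimality. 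Thus the Chern part of the statement is a purely formal translation of ``$\cE$ has the instanton first Chern class and minimal charge''.

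Next I would set up the cohomological dictionary. From $c_1(\cE)=(2q_X-i_X)h$ one gets $\cE^\vee\cong\cE((i_X-2q_X)h)$, so Serre duality \eqref{Serre} yields
$$
h^i\big(X,\cE(sh)\big)=h^{3-i}\big(X,\cE((-2q_X-s)h)\big),
$$
a symmetry about $s=-q_X$. Writing $\cF(th)=\cE((t+2-q_X)h)$ and putting $s=t+2-q_X$, the four weakly Ulrich vanishings of $\cF$ collapse, via this symmetry, to two conditions on $\cE$: that $h^0\big(X,\cE(sh)\big)=0$ for $s\le-q_X$ (with the $h^3$--condition its Serre dual), and that the graded piece $\bigoplus_{s\in\bZ}H^1\big(X,\cE(sh)\big)$ vanish outside $s\in\{-q_X,\,1-q_X\}$ (with the $h^2$--condition its Serre dual). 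Granting the Chern match, the proposition therefore reduces to the equivalence, for a rank--two bundle with $c_1(\cE)=(2q_X-i_X)h$ and $c_2(\cE)h=k_X$, of ``$\mu$--semistable, $h^0(\cE)=0$, $h^1(\cE(-q_Xh))=0$'' with ``$h^0(\cE(sh))=0$ for $s\le-q_X$ and $H^1$ concentrated in degrees $-q_X,1-q_X$''.

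For the implication ``minimal instanton $\Rightarrow$ weakly Ulrich'' the $h^0$--vanishing is immediate from $\mu$--semistability and Lemma \ref{lHoppe} (with $h^0(\cE)=0$ at the endpoint), and its dual gives the $h^3$--vanishing. For the concentration of $H^1$ I would exploit that, by \eqref{RRgeneral} and the displayed symmetry, $\chi(\cE(sh))$ is a cubic in $s$ that is odd about $s=-q_X$, has leading coefficient $\deg(X)/3$ and value $0$ at $s=-q_X$, and whose one remaining coefficient is fixed by $c_2(\cE)h=k_X$. Together with the instanton vanishing $h^1(\cE(-q_Xh))=0$ and Serre duality this determines $\chi$ and propagates the vanishing through the central band $-2q_X\le s\le0$; the outer range is reached by Serre duality and Serre's vanishing theorem, \emph{provided} one also knows that minimality forces the ``earnest'' vanishing $h^1(\cE(sh))=0$ for $s\le-q_X-1$ and, dually, the positive--twist vanishing. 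This refined vanishing is exactly the rigidity supplied by the minimal charge, and is the substantive input of this direction.

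The converse is the delicate half. The Chern data are matched, and weak Ulrichness already furnishes $h^1(\cE(sh))=0$ for every $s\ne-q_X,1-q_X$ (so in particular earnestness) as well as $h^0(\cE(sh))=0$ for $s\le-q_X$. To recover $\mu$--semistability I would use the \emph{whole} family of these vanishings rather than a single Euler characteristic: since $\Pic(X)=\bZ h$, a destabilizing sub--line bundle $\cO_X(ah)$ with $a\ge1$ would produce an extension $0\to\cO_X(ah)\to\cE\to\cI_{Z\vert X}\otimes\cO_X((2q_X-i_X-a)h)\to0$ whose quotient contributes a nonzero $h^1(\cE(sh))$ at a twist outside $\{-q_X,1-q_X\}$, contradicting the hypothesis; $h^0(\cE)=0$ then follows. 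The genuine obstacle is the central vanishing $h^1(\cE(-q_Xh))=0$: at the symmetric centre Riemann--Roch is vacuous ($\chi=0$ imposes nothing) and weak Ulrichness explicitly \emph{permits} this group to be nonzero. I expect to close the gap by the same minimal--charge rigidity: a positive $h^1(\cE(-q_Xh))$ would, through the monad/Beilinson display imposed by the weakly Ulrich cohomology, force $c_2(\cE)h>k_X$, contradicting minimality. Establishing this incompatibility—equivalently, that minimal charge pins $\bigoplus_sH^1\big(X,\cE(sh)\big)$ into exactly the two admissible degrees with vanishing central part—is the heart of the proof in both directions.
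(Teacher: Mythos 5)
Your reduction of the Chern--class conditions and the Serre--duality symmetry about $s=-q_X$ are correct, and you rightly locate the difficulty; but at both of the places you yourself flag as ``the heart of the proof'' you stop at an expectation rather than an argument, and the mechanism you propose (an abstract ``minimal--charge rigidity'' extracted from a monad/Beilinson display) is not supplied and is not how the paper closes the gap. The paper's proof is a classification argument: since $i_X\ge3$ forces $X=\p3$ or the smooth quadric $Q$ and $k_X=1$, a minimal instanton is $\mu$--stable with $c_2(\cE)h=1$, hence by \cite[Lemma II.4.3.2]{O--S--S} a null--correlation bundle on $\p3$ (so all the required vanishings are read off the sequence $0\to\cO_{\p3}(-1)\to\Omega^1_{\p3}(1)\to\cE\to0$ together with Serre duality), respectively the case handled by \cite[Remark 4.3]{C--C--G--M} on $Q$, where $\cF=\cE(h)$ is in fact Ulrich. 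This explicit identification is what furnishes the vanishing of $h^1\big(X,\cE(sh)\big)$ outside the two admissible degrees that you leave as ``the substantive input''; without it your forward direction is incomplete.

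In the converse direction there is a second, more concrete gap you do not address: on the quadric, Hoppe's criterion for the $\mu$--stability of $\cF(-h)$ requires $h^0\big(X,\cF(-h)\big)=0$, whereas the weakly Ulrich definition only gives $h^0\big(X,\cF(th)\big)=0$ for $t\le-2$; your proposed contradiction via ``a nonzero $h^1$ at a forbidden twist'' does not reach this borderline case. The paper obtains it geometrically: a nonzero section of $\cF(-h)$ would vanish exactly on a line $Z$ (since $c_2(\cF(-h))h=1$ and $\Pic(X)=\bZ h$), and the Koszul sequence gives $\det(\cN_{Z\vert X})\cong\cO_X(-h)\otimes\cO_Z$ while adjunction gives $\cO_X(h)\otimes\cO_Z$, a contradiction. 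It then deduces $h^1\big(X,\cF(-h)\big)=0$ from Riemann--Roch, gets $h^1\big(X,\cF(-2h)\big)=0$ by restricting to a hyperplane and invoking Maruyama's restriction theorem, and identifies $\cF$ with the twisted spinor bundle $\cS(h)$, which is the minimal instanton up to twist. So your skeleton is sound as a formal translation, but the central vanishings in both directions must be supplied by the classification at charge $k_X=1$ (null--correlation and spinor bundles), not by the unproven rigidity you postulate.
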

\begin{proof}
It is trivial to check that equalities \eqref{Chern} hold for $\cF$ if and only if $c_1(\cE)=(2q_X-i_X)h$ and $c_2(\cE)h=k_X$.

Let $i_X=4$, so that $2=q_X$, i.e. $X=\p3$ and $\cE=\cF$ is a minimal instanton bundle. Thus $\cE$ is $\mu$--stable, $c_1(\cE)=0$ and $c_2(\cE)h=1$. Thus \cite[Lemma II.4.3.2]{O--S--S} implies that  $\cE$ is a null--correlation bundle i.e. it fits into an exact sequence of the form
\begin{equation}
\label{seqNull}
0\longrightarrow\cO_{\p3}(-1)\longrightarrow\Omega_{\p3}^1(1)\longrightarrow\cE\longrightarrow0.
\end{equation}
We now show that each null--correlation bundle is weakly Ulrich. The cohomology of sequence \eqref{seqNull} returns $h^0\big(X,\cF(th)\big)=0$ for $t\le 0$, and $h^1\big(X,\cF(th)\big)=0$ for $t\ne-1$. Thanks to equality \eqref{Serre}, we then deduce $h^0\big(X,\cF(th)\big)=0$ for $t\ge -4$, and $h^2\big(X,\cF(th)\big)=0$ for $t\ne-3$. We deduce that $\cF$ is weakly Ulrich.

Conversely, assume that $\cF$ is a rank $2$ weakly Ulrich bundle on $X=\p3$ with $c_1(\cF)=0$ and $c_2(\cF)h=1$. Equality \eqref{RRgeneral} implies $\chi(\cF)=0$. Since $h^i\big(X,\cF\big)=0$ for $i\ne0$, it follows that $h^0\big(X,\cF\big)=0$ as well. Thus $\cF$ is $\mu$--stable thanks to Lemma \ref{lHoppe}, hence it is a null--correlation bundle thanks to \cite[Lemma II.4.3.2]{O--S--S}.

Let $i_X=3$, so that $1=q_X$, i.e. $X=Q$ is a smooth quadric and $\cE=\cF(-h)$ is a minimal instanton bundle. Thus \cite[Remark 4.3]{C--C--G--M} implies that $\cF$ is Ulrich. 

Conversely let $\cF$ be  a rank $2$ weakly Ulrich bundle on $X=Q$ with $c_1(\cF)=h$ and $c_2(\cF)h=1$. By definition $h^0\big(X,\cF(-2h)\big)=0$. Let $h^0\big(X,\cF(-h)\big)\ne0$: since $\Pic(X)$ is cyclic and generated by $\cO_X(h)$ it follows that each non--zero section $s\in H^0\big(X,\cF(-h)\big)$ vanishes exactly along a curve $Z$ and
$$
\det(\cN_{Z\vert X})\cong\det(\cF(-h))\otimes\cO_Z\cong\cO_X(-h)\otimes\cO_Z.
$$
The degree of $Z$ is $c_2(\cF(-h))h=1$, hence $Z$ is a line. The adjunction formula on $X$ then returns 
$$
\det(\cN_{Z\vert X})\cong\omega_Z\otimes\cO_X(3h)\cong\cO_X(h)\otimes\cO_Z.
$$
The contradiction above implies that $h^0\big(X,\cF(-h)\big)=0$, hence $\cF(-h)$ is $\mu$--stable thanks to Lemma \ref{lHoppe}. 

By definition we know that $h^1\big(X,\cF(-th)\big)=0$ for $t\ne-2,-1$. Equality \eqref{RRgeneral} gives $h^1\big(X,\cF(-h)\big)=0$. Let $H\subseteq X$ be a hyperplane and consider the exact sequence
\begin{equation}
\label{seqRestriction}
0\longrightarrow\cO_X(-h)\longrightarrow\cO_X\longrightarrow\cO_H\longrightarrow0.
\end{equation}
Since $\cF$ is $\mu$--semistable, then the same is true for $\cF\otimes\cO_H$, thanks to \cite[Theorem 3]{Ma}. Thus the cohomology of sequence \eqref{seqRestriction} tensored by $\cF(-h)$ and $h^1\big(X,\cF(-h)\big)=0$ imply that $h^1\big(X,\cF(-2h)\big)=0$. We have $h^i\big(X,\cF(th)\big)=0$ for $i=1,2$ and $t\in\bZ$, thanks to equality \eqref{Serre}. We easily deduce from the Chern classes of $\cF$ that it cannot split as a sum of two line bundles, hence it necessarily coincides with $\cS(h)$, where $\cS$ is the spinor bundle on $X$ (see \cite{Ott1}).
\end{proof}

\begin{remark}
In the above proof we have checked that when $i_X=3$, then Ulrich bundles of rank $2$ are exactly the weakly Ulrich ones.
\end{remark}

It is natural to ask if a similar result holds when $i_X\le2$. In \cite[Remark 4.3]{C--C--G--M} we also showed that when $i_X=2$ (hence $q_X=1$), $\cO_X(h)$ is very ample and $\cE$ is a minimal instanton bundle, then $\cF=\cE((2-q_X)h)\cong\cE(h)$ is Ulrich, hence weakly Ulrich: in this case $\cF$ still satisfies equalities \eqref{Chern}. 

Thus it is natural to ask if weakly Ulrich bundles are automatically instanton bundles up to twists when $i_X\le2$. This is not the case as the following example and remark show. 

\begin{example}
Let $X$ be a Fano threefold with $i_X=2$ and  take the union $Z\subseteq X$ of two disjoint lines. The scheme $Z$ is a locally complete intersection subscheme of degree $2$ and genus $p_a(Z)=-1$. Moreover, $\omega_Z\cong\cO_Z(-2h)$, hence the adjunction formula on $X$ implies $\det(\cN_{Z\vert X})\cong\cO_Z$. Since $h^i\big(X,\cO_X\big)=0$ for $i\ge1$, it follows from Theorem \ref{tSerre} the existence of an exact sequence of the form
$$
0\longrightarrow \cO_X\longrightarrow \cE\longrightarrow \cI_{Z\vert X}\longrightarrow 0.
$$
The cohomology of the above sequence tensored by $\cO_X(th)$ yields $h^0\big(X,\cE(th)\big)=0$ if $t\le-1$. Similarly, the cohomology of sequence \eqref{seqIdeal} tensored by $\cO_X(th)$ implies $h^1\big(X,\cI_{Z\vert X}(th)\big)=0$ if $t\ne0$. It follows that $h^1\big(X,\cE(th)\big)=0$ in the same range. Equality \eqref{Serre1} finally yields that $\cF:=\cE(h)$ is weakly Ulrich and satisfies equalities \eqref{Chern}.

On the other hand $c_1(\cE)=0$, $c_2(\cE)=2$ and $h^1\big(X,\cE(-h)\big)=0$. Moreover, it is not difficult to check the $\mu$--semistability of $\cE$ using the classification of Fano threefold with $i_X=2$ (see \cite{I--P}) and Lemma \ref{lHoppe}. Nevertheless $\cE$ is obviously not an instanton bundle because $h^0\big(X,\cE\big)=1$.
\end{example}

\begin{remark}
When $i_X=1$ (hence $q_X=0$) the picture is even vaguer, because the number $k_X$ is generally not attained by instanton bundles on $X$ (see \cite{Fa} where it is proved a sharp lower bound for $c_2(\cE)h$ when $\varrho_X=1$). Anyhow, in all the examples in the literature, every minimal instanton bundle $\cE$ is aCM (e.g. see \cite{Fa, Cs--Ge} and Theorem \ref{tMinimal} above). If $\cF:=\cE((2-q_X)h)\cong\cE(2h)$, then $h^0\big(X,\cF(-2h)\big)=h^3\big(X,\cF(-2h)\big)=0$. It follows that $\cF$ is still weakly Ulrich in all these cases.
\end{remark}

Thus it is natural to ask the following questions.

\begin{question}
Is each minimal instanton bundle on $X$ weakly Ulrich up to twists when $i_X=1$? 
\end{question}

\begin{question}
Is it possible to classify weakly Ulrich bundles $\cF$ of rank $2$ on $X$ with $c_1(\cF)=(4-i_X)h$, at least for low values of  $c_2(\cF)h$?
\end{question}

\bigskip
\noindent
Vincenzo Antonelli,\\
Dipartimento di Scienze Matematiche, Politecnico di Torino,\\
c.so Duca degli Abruzzi 24,\\
10129 Torino, Italy\\
e-mail: {\tt vincenzo.antonelli@polito.it}

\bigskip
\noindent
Gianfranco Casnati,\\
Dipartimento di Scienze Matematiche, Politecnico di Torino,\\
c.so Duca degli Abruzzi 24,\\
10129 Torino, Italy\\
e-mail: {\tt gianfranco.casnati@polito.it}

\bigskip
\noindent
Ozhan Genc,\\
Faculty of Mathematics and Computer Science, Jagiellonian University,\\
ul. {\L}ojasiewicza 6,\\
30-348 Krak{\'o}w, Poland\\
e-mail: {\tt ozhangenc@gmail.com}

\end{document}